\newtheorem{theorem}{Theorem}
\newtheorem{lemma}{Lemma}
\newtheorem{conjecture}{Conjecture}
\newtheorem{corollary}{Corollary}
\theoremstyle{definition}
\newtheorem{question}{Question}
\newtheorem{definition}{Definition}
\newtheorem{remark}{Remark}
\newcommand{\cG}{\mathcal{G}}
\newcommand{\cH}{\mathcal{H}}
\newcommand{\cA}{\mathcal{A}}
\newcommand{\cF}{\mathcal{F}}
\newcommand{\cM}{\mathcal{M}}
\newcommand{\cT}{\mathcal{T}}
\title{Short proofs of three results about intersecting systems}
\author{J\'ozsef Balogh\thanks{Department of Mathematical Sciences, University of Illinois at Urbana-Champaign, IL, USA. Email: \texttt{jobal@illinois.edu}. Research supported by NSF RTG Grant DMS-1937241, NSF Grant DMS-1764123 and Arnold O. Beckman Research Award (UIUC) Campus Research Board 18132, the Langan Scholar Fund (UIUC)  and the Simons Fellowship.}  
\and
William Linz\thanks{Department of Mathematical Sciences, University of Illinois at Urbana-Champaign, IL, USA. Email: \texttt{wlinz2@illinois.edu}. Partially supported by the Hohn-Nash Fellowship (UIUC) and NSF RTG Grant DMS-1937241.}}
\date{\today}
\begin{document}
\maketitle

\begin{abstract}
In this note, we give short proofs of three theorems about intersection problems. The first one is a determination of the maximum size of a nontrivial $k$-uniform, $d$-wise intersecting family for $n\ge \left(1+\frac{d}{2}\right)(k-d+2)$, which improves the range of $n$ a recent result of O'Neill and Verstra\"{e}te. Our proof also extends to $d$-wise, $t$-intersecting families, and from this result we obtain a version of the Erd\H{o}s-Ko-Rado theorem for $d$-wise, $t$-intersecting families.

Our second result partially proves a conjecture of Frankl and Tokushige about $k$-uniform families with restricted pairwise intersection sizes. 

Our third result is about intersecting families of graphs. Answering a question of Ellis, we construct $K_{s, t}$-intersecting families of graphs which have size larger than the Erd\H{o}s-Ko-Rado-type construction, whenever $t$ is sufficiently large in terms of $s$. The construction is based on nontrivial $(2s)$-wise $t$-intersecting families of sets.

\end{abstract}

\section{Introduction}

Let $\cF \subseteq 2^{[n]}$ be a family of subsets of the set $[n]:= \{1, 2, \ldots, n\}$. The family $\cF$ is \textit{$d$-wise $t$-intersecting} if for every $F_1, \ldots, F_d \in \cF$, we have $|\bigcap_{i=1}^dF_i| \ge t$. The family $\cF$ is \textit{nontrivial ($d$-wise) $t$-intersecting} if $\cF$ is ($d$-wise) $t$-intersecting and  $|\bigcap_{F\in \cF} F| < t$. If $\cF \subset \binom{[n]}{k}$ and $L \subset [0, k-1]$, then $\cF$ is an \emph{$(n, k, L)$-system} if $|F\cap F'| \in L$ for all distinct $F, F'\in \cF$. 

Let $\cG \subseteq 2^{\binom{[n]}{2}}$ be a family of labelled graphs on $n$ vertices. Given a fixed, unlabelled graph $H$, the family $\cG$ is \emph{$H$-intersecting} if for every pair of graphs $G_1, G_2 \in \cG$, the graph $G_1\cap G_2$ contains a copy of $H$ (note that $(V(G_1\cap G_2) = V(G_1) = V(G_2)$ and $E(G_1\cap G_2) = E(G_1) \cap E(G_2)$). For a graph $H$, the \emph{Erd\H{o}s-Ko-Rado-type} (shortened hereafter to EKR-type) construction of an $H$-intersecting family of graphs is the family $\cH$ of graphs on $n$ vertices which contain a fixed copy of $H$. Notice that when $\cH$ is EKR-type, then $|\cH| = 2^{\binom{n}{2}-|E(H)|}$. If the maximum-size $H$-intersecting family of graphs is EKR-type, then we say the graph $H$ has the \emph{EKR property}. 

In this note, we give short proofs of three results about set systems satisfying some intersection properties and about intersecting families of graphs. First, we determine the maximum size of a nontrivial $d$-wise intersecting $k$-uniform family for $n \ge \left(1+\frac{d}{2}\right)(k-d+2)$, which greatly extends the range of a recent result of O'Neill and Verstra\"{e}te \cite{OV}. Our result also extends to nontrivial $d$-wise, $t$-intersecting, $k$-uniform families, and from this we derive a version of the Erd\H{o}s-Ko-Rado theorem for $d$-wise, $t$-intersecting families. 

Our second theorem partially settles a conjecture of Frankl and Tokushige \cite{FT} for a general bound for the maximum size of an $(n, k, L)$-system when $L = [0, \ell - 1] \cup [\ell+1, k-1]$.  

Our third result is a construction of $K_{s, t}$-intersecting families whose size is larger than the EKR-type construction for $K_{s, t}$, answering a question of Ellis~\cite{E}. The construction is based on nontrivial $(2s)$-wise $t$-intersecting families of sets.

\subsection{Nontrivial intersecting families}

We define a pair of nontrivial $d$-wise intersecting families $\cA(n, k, d)$ and $\cH(n, k, d)$. 
\begin{definition}\label{defn:akdhkddefn}

\[\cA(n, k,d) := \left\{A\in \binom{[n]}{k}: |A\cap [d+1]| \ge d\right\}.\]
\[\cH(n, k,d) := \left\{A\in \binom{[n]}{k}: [d-1] \subset A, A\cap [d, k+1] \neq \emptyset\right\}\bigcup \bigg\{[k+1]\setminus {i}: i\in [d-1]\bigg\}.\]

\end{definition}
Hilton and Milner \cite{HM} conjectured that one of the two families $\cA(n, k, d)$ or $\cH(n, k, d)$ is the maximum-size $k$-uniform $d$-wise intersecting family, for $n$ sufficiently large. 

In the case $d=2$, the classical result of Hilton and Milner \cite{HM} determines the maximum size of a nontrivial intersecting $k$-uniform family. 

\begin{theorem}[Hilton-Milner \cite{HM}]\label{thm:hm}
Let $\cF\subset \binom{[n]}{k}$ be a nontrivial intersecting family. For $n > 2k$, we have
\[|\cF| \le \binom{n-1}{k-1} - \binom{n-k-1}{k-1} + 1.\]

Furthermore, equality holds for $k > 3$ only if $\cF \cong \cH(n, k, 2)$. If $k=3$, then equality holds only if $\cF \cong \cH(n, 3, 2)$ or $\cF \cong \cA(n, 3, 2)$. 
\end{theorem}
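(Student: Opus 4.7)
My plan is to use the shifting method, the standard tool for intersection problems. Recall that the shift $S_{ij}$ for $i < j$ sends $\cF \subset \binom{[n]}{k}$ to the family obtained by replacing each $F \in \cF$ with $j \in F$, $i \notin F$ by $(F \setminus \{j\}) \cup \{i\}$, whenever the latter is not already in $\cF$. Shifting preserves $|\cF|$ and the intersecting property, but may destroy nontriviality. To avoid this, I would apply shifts to $\cF$ only as long as nontriviality is preserved, stopping at a family $\cF^*$ which is either fully shifted or one shift short of becoming a trivial star. In either case $|\cF^*| = |\cF|$.

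Suppose first that $\cF^*$ is fully shifted. Nontriviality forces some $F \in \cF^*$ with $1 \notin F$, and iteratively shifting the largest element of $F$ down to the smallest available element of $\{2, \ldots, k+1\}$ (without introducing $1$) shows that $F_0 := \{2, 3, \ldots, k+1\}$ lies in $\cF^*$. Every other member of $\cF^*$ must then meet $F_0$, so I split $\cF^* = \cF_1^* \sqcup \cF_0^*$ according to whether $1$ belongs to the set. A direct count gives $|\cF_1^*| \leq \binom{n-1}{k-1} - \binom{n-k-1}{k-1}$, the number of $k$-sets through $1$ meeting $F_0$. For $\cF_0^*$, I would argue $\cF_0^* = \{F_0\}$ when $n > 2k$: a hypothetical second member $F \in \cF_0^*$ would exclude from $\cF_1^*$ all sets through $1$ meeting $F_0$ but disjoint from $F$, and a short count using $n > 2k$ and $k \geq 3$ shows this loss is at least $2$, strictly more than the net gain of $1$ from adding the extra set to $\cF_0^*$. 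Combining gives $|\cF| = |\cF^*| \leq \binom{n-1}{k-1} - \binom{n-k-1}{k-1} + 1$.

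If $\cF^*$ is not fully shifted, then one pending shift $S_{ij}$ would render the family trivial. Just before this shift, some element $i$ is ``almost universal'' in $\cF^*$, and the few exceptional sets not containing $i$ are pinned into a restricted form by the would-be-fatal shift. A direct enumeration of these possibilities yields the same bound.

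The main obstacle is the uniqueness clause. Tracing back through the shifts, equality forces $\cF^*$ into a specific shifted pattern that, for $k \geq 4$, lifts uniquely to $\cF \cong \cH(n, k, 2)$. For $k = 3$, however, one can verify $|\cA(n, 3, 2)| = 3n - 8 = |\cH(n, 3, 2)|$, so $\cA(n, 3, 2)$ provides a genuinely second extremal family that is not obtainable from $\cH(n, 3, 2)$ by permuting labels. The shifting reversal can therefore land on either type, and verifying that no further extremal family exists requires careful bookkeeping of which original configurations collapse to the same shifted skeleton.
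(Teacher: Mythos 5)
This theorem is not proved in the paper at all: it is the classical Hilton--Milner theorem, quoted from \cite{HM} as background, so there is no proof of the authors' to compare against and your sketch has to stand on its own. It does not, as written. The central claim in your fully shifted case --- that $\cF_0^*=\{F_0\}$, i.e.\ that a shifted nontrivial intersecting family contains only one set avoiding the element $1$ --- is simply false. The family $\cA(n,k,2)=\{A\in\binom{[n]}{k}:|A\cap[3]|\ge 2\}$ is shifted, nontrivial and intersecting, yet it contains all $\binom{n-3}{k-2}$ sets that contain $\{2,3\}$ and avoid $1$. So you cannot prove the bound by forcing this structure; you must bound $|\cF_1^*|+|\cF_0^*|$ jointly when $\cF_0^*$ is large. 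Your exchange count (``each extra member of $\cF_0^*$ costs at least $2$ sets from $\cF_1^*$ while gaining $1$'') does not repair this, because the sets through $1$ excluded by different members of $\cF_0^*$ overlap heavily, so the losses are not additive; what is actually needed is the inequality that the number of $k$-sets through $1$ meeting $F_0$ but disjoint from some member of $\cF_0^*$ is at least $|\cF_0^*|-1$. That statement is exactly a nonempty cross-intersecting inequality (provable via Hilton's lemma/Kruskal--Katona, or the Frankl--Tokushige cross-intersecting theorem), and it is the real content of the proof; it is missing from your argument.

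The same gap reappears in your ``one pending shift $S_{ij}$ would make the family trivial'' case: there every set meets $\{i,j\}$, and after deleting $i$ (resp.\ $j$) the two link families are nonempty cross-intersecting $(k-1)$-uniform families on $n-2$ points, so ``a direct enumeration'' is again precisely this cross-intersecting bound, not a routine count. Finally, you acknowledge yourself that the uniqueness statement (only $\cH(n,k,2)$ for $k>3$, and additionally $\cA(n,3,2)$ for $k=3$) is left to ``careful bookkeeping''; since shifting can merge genuinely different extremal configurations, reversing the shifts to recover uniqueness requires a real argument (tracking when each $S_{ij}$ preserves equality), and as it stands this part is not proved either. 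In short, the shifting framework you set up is a legitimate and standard route to Hilton--Milner, but the two counting steps you wave at are where the theorem actually lives, and one of them is based on a false structural claim.
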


For large $n$, O'Neill and Verstra\"{e}te \cite{OV} recently verified the conjecture of Hilton and Milner~\cite{HM} for nontrivial $d$-wise intersecting families. 

\begin{theorem}[O'Neill-Verstra\"{e}te \cite{OV}]\label{thm:ov}Let $k$ and $d$ be integers with $2\le d < k$. Then, there is an $n_0 = n_0(k, d)$ such that for every $n\ge n_0$, if $\cF \subset \binom{[n]}{k}$ is a nontrivial $d$-wise intersecting family, then we have
\[|\cF|\le \max\left\{|\cH(n, k, d)|, |\cA(n, k,d)|\right\}.\]
\end{theorem}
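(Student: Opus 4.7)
Let $\cF \subseteq \binom{[n]}{k}$ be a nontrivial $d$-wise intersecting family. Let $d^* \ge d$ be the largest integer such that $\cF$ is $d^*$-wise intersecting, which is finite by nontriviality. Then there exist sets $F_1, \ldots, F_{d^*+1} \in \cF$ with $\bigcap_i F_i = \emptyset$ but $\bigcap_{j \ne i} F_j \ne \emptyset$ for every $i$; picking $x_i \in \bigcap_{j \ne i} F_j$ yields distinct elements with $x_i \notin F_i$ and $x_i \in F_j$ for $j \ne i$. Set $X = \{x_1, \ldots, x_{d^*+1}\}$. Since both $|\cA(n,k,d')|$ and $|\cH(n,k,d')|$ are decreasing in $d'$ once $n$ is large in terms of $k, d$, we may reduce to the case $d^* = d$, so $|X| = d+1$.

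The key observation is that for every $F \in \cF$ and every $(d-1)$-subset $S \subseteq [d+1]$, the $d$-wise intersecting property applied to $F$ together with the sets $\{F_j : j \in S\}$ gives $F \cap \bigcap_{j \in S} F_j \ne \emptyset$. This intersection always contains the pair $\{x_i : i \notin S\} \subseteq X$, but may contain additional elements $Y_S := (\bigcap_{j \in S} F_j) \setminus X$. In the idealized case where $Y_S = \emptyset$ for every $S$, each $F \in \cF$ must hit every pair inside $X$, so $|F \cap X| \ge d$, which means $\cF$ is contained (up to relabelling) in $\cA(n,k,d)$, giving the bound $|\cF| \le |\cA(n,k,d)|$.

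The main work is the case where some $Y_S$ is nonempty. The plan is to choose the witnesses $F_1, \ldots, F_{d+1}$ so as to minimize a suitable quantity, for instance $\sum_S |Y_S|$ or $|F_1 \cup \cdots \cup F_{d+1}|$, and then use this minimality together with the $d$-wise intersecting property to force the extra elements across different $Y_S$'s to concentrate into a common $(d-1)$-subset $T \subseteq [n]$ contained in essentially every $F \in \cF$. The $k$-uniformity and $d$-wise intersecting property then force any $F \in \cF$ not containing $T$ to be one of a short list of exceptional sets matching (up to isomorphism) the $d-1$ special sets $\{[k+1] \setminus \{i\} : i \in [d-1]\}$ appearing in $\cH(n,k,d)$, so $|\cF| \le |\cH(n,k,d)|$.

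The main obstacle is this last step: proving that the extra elements in different $Y_S$'s actually align into a single common $(d-1)$-core $T$, rather than giving a more diffuse structure. This is where the minimality of the choice of $F_1, \ldots, F_{d+1}$ and a careful case analysis enter, and where the hypothesis $n \ge n_0(k,d)$ is used, to ensure that the ``main'' sets of the resulting $\cH$-type family dominate any exceptional contributions. Sharpening the threshold to $n \ge \left(1 + \tfrac{d}{2}\right)(k-d+2)$ as the authors do should come from replacing loose bounds in this last step by a tight comparison between $|\cA(n,k,d)|$ and $|\cH(n,k,d)|$ in the crossover regime.
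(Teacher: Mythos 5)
There is a genuine gap: what you have written is a plan, not a proof, and the part you defer (``the main work is the case where some $Y_S$ is nonempty'') is precisely the entire difficulty of the theorem. You do not specify which quantity is minimized, and you give no argument that minimality of the witness sets $F_1,\ldots,F_{d+1}$ forces the elements of the various $Y_S$ to concentrate into a single common $(d-1)$-set $T$ contained in almost every member of $\cF$; this concentration step is where Hilton--Milner-type and delta-system arguments require real work (and is why the original proof in \cite{OV} needed a tower-type bound on $n_0$). As written, the argument only handles the idealized case $Y_S=\emptyset$ for all $S$, which gives containment in $\cA(n,k,d)$, and asserts the remaining structure ``should'' follow. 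There is also a secondary flaw in the reduction to $d^*=d$: if $\cF$ is $d^*$-wise intersecting for some $d^*>d$ you would need the theorem at level $d^*$ (circular without an induction scheme), and the hypothesis $d^*<k$ can genuinely fail -- e.g.\ the family of all $k$-subsets of $[k+1]$ is a nontrivial $k$-wise intersecting family -- so the monotonicity remark about $|\cA(n,k,d')|$ and $|\cH(n,k,d')|$ does not by itself justify assuming $|X|=d+1$.

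For contrast, the paper avoids this structural analysis entirely. By Lemma~\ref{lem:msetslem} (with $t=1$, $m=2$) and Corollary~\ref{cor:dwisecor}, a nontrivial $d$-wise intersecting family is automatically a nontrivial $(d-1)$-intersecting family, and then the Ahlswede--Khachatrian complete nontrivial-intersection theorem (Theorem~\ref{thm:akntcithm}), supplemented by the complete intersection theorem (Theorem~\ref{thm:aktcithm}) in the lower range of $n$, already yields the bound $\max\{|\cA(n,k,d)|,|\cH(n,k,d)|\}$ with the stated extremal families. If you want to salvage your direct approach, you would need to supply the missing concentration lemma in full; otherwise the short route is to quote the Ahlswede--Khachatrian results as the paper does.
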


O'Neill and Verstra\"{e}te \cite{OV} showed that one can choose $n_0(k, d) = d+e(k^22^k)^{2^k}(k-d)$. If $n < kd/(d-1)$, then every $\cF \subset \binom{[n]}{k}$ is $d$-wise intersecting. O'Neill and Verstra\"{e}te~\cite{OV} conjectured that Theorem \ref{thm:ov} holds for $n \ge kd/(d-1)$. 

\begin{conjecture}[O'Neill-Verstra\"{e}te \cite{OV}]\label{conj:ov} In Theorem \ref{thm:ov}, one can choose \[n_0(k, d) = \frac{kd}{d-1}.\]
\end{conjecture}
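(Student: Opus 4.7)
The plan is to combine Frankl's $(i,j)$-shifting with a Katona-style cyclic averaging argument that can match the threshold $n = kd/(d-1)$ exactly, rather than the superlinear thresholds attainable by shifting alone.

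First, I would apply the shifts $S_{ij}$ for all $1 \le i < j \le n$ to replace $\cF$ with a shifted $d$-wise intersecting family of the same size. Shifts can destroy nontriviality, so I would track which elements serve as ``witnesses'' of nontriviality (for each $i$, some $F_i \in \cF$ with $i \notin F_i$), and prove that either this witness structure survives a careful sequence of shifts (so we may analyze a shifted nontrivial family directly), or else the original $\cF$ has a near-star form already dominated by $\cA(n,k,d)$ up to isomorphism.

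Second, for a shifted nontrivial $d$-wise intersecting $\cF$, I would partition by the intersection pattern with $[d+1]$. The shifted property forces most $F \in \cF$ to satisfy $|F \cap [d+1]| \ge d$, contributing a subfamily of $\cA(n,k,d)$. The remaining ``exceptional'' sets with $|F \cap [d+1]| \le d-1$ are severely constrained by combining the $d$-wise intersecting condition with shiftedness. The key structural lemma to prove is: if a shifted nontrivial $d$-wise intersecting family on $[n]$ contains a set missing some element of $[d+1]$, then every set must contain $[d-1]$ and meet $[d,k+1]$, which is exactly the $\cH(n,k,d)$-type structure. A careful comparison of the two regimes then yields the $\max\{|\cA|,|\cH|\}$ bound.

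Third, to sharpen the threshold from ``sufficiently large'' down to $n \ge kd/(d-1)$, I would employ a cyclic averaging argument in the style of Katona's proof of EKR. Fix a uniformly random cyclic ordering of $[n]$ and count the length-$k$ arcs that lie in $\cF$. A pigeonhole on any $d$ rotations of a single arc bounds the number of arcs in $\cF$, with the bound becoming tight precisely when $n = kd/(d-1)$. The nontriviality condition excludes the extremal cyclic configurations (which correspond to trivial stars), and combined with the second step, forces $\cF$ into one of $\cA(n,k,d)$ or $\cH(n,k,d)$.

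The main obstacle is matching the conjectured threshold exactly. Standard shifting plus delta-system arguments, which drive the O'Neill-Verstra\"{e}te bound, produce a threshold superlinear in $k$, while the conjecture requires $n$ barely above the trivial bound $kd/(d-1)$. At this boundary, every $k$-uniform family on $[n]$ is already $d$-wise intersecting, so nontriviality is the only size-limiting constraint; extracting the full Hilton-Milner-type bound from nontriviality alone is delicate. I expect the hardest step to be the structural classification: enumerating the finitely many shifted near-extremal configurations at the threshold and verifying for each that its size is at most $\max\{|\cA(n,k,d)|,|\cH(n,k,d)|\}$. A genuinely new stability lemma, rather than a black-box application of shifting, appears necessary to close the gap.
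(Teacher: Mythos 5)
This statement is a \emph{conjecture} of O'Neill and Verstra\"{e}te, and the paper does not prove it --- it \emph{refutes} it. Section 4 exhibits the nontrivial $3$-wise intersecting family
\[
\cM(11,7,3,3)=\left\{F\in\binom{[11]}{7}:\ 1\in F,\ |F\cap[2,8]|\ge 4\right\}\cup\{[2,8]\},
\]
which has $176$ members, whereas $\max\{|\cA(11,7,3)|,|\cH(11,7,3)|\}=175$. Here $n=11$, $k=7$, $d=3$, so $n\ge kd/(d-1)=10.5$ lies in the conjectured range but below the threshold $(1+\frac{d}{2})(k-d+2)=15$ of Theorem \ref{thm:dwiseintthm}. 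The paper generalizes this to a whole collection of families $\cM(n,k,d,r)$ and proposes a corrected extremal conjecture (Conjecture \ref{conj:dwiseintconj}) for the range $kd/(d-1)\le n\le (1+\frac{d}{2})(k-d+2)$.

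Consequently your proposal is attempting to prove a false statement, and at least one of its steps must break. Concretely, the structural dichotomy you posit in your second step --- that a shifted nontrivial $d$-wise intersecting family either lies in $\cA(n,k,d)$ up to small corrections or has every member containing $[d-1]$ and meeting $[d,k+1]$ --- fails for $\cM(11,7,3,3)$ (which, after relabelling, is shifted but is of neither type), and the cyclic averaging in your third step cannot deliver the claimed bound at $n\sim kd/(d-1)$ because the bound itself is wrong there. Your first two steps are, in spirit, close to how one \emph{does} prove the theorem for larger $n$; but the correct contribution of the paper in that regime (Theorem \ref{thm:dwiseintthm}) proceeds quite differently, by observing that a nontrivial $d$-wise intersecting family is $(d-1)$-intersecting (Corollary \ref{cor:dwisecor}) and then quoting the Ahlswede--Khachatrian complete (nontrivial) intersection theorems, which is what yields the linear threshold $n>(1+\frac{d}{2})(k-d+2)$.
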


We greatly extend the range of $n$ for which Theorem \ref{thm:ov} is known, and also give a counterexample to the range conjectured in Conjecture \ref{conj:ov} when $n \sim kd/(d-1)$.

\begin{theorem}\label{thm:dwiseintthm}
If $\cF \subseteq \binom{[n]}{k}$ is a nontrivial $d$-wise intersecting family with $2\le d < k$, then for every $n > \left(1+\frac{d}{2}\right)(k-d+2)$, we have
\[|\cF| \le \max\left\{|\cH(n, k, d)|, |\cA(n, k, d)|\right\}.\]

Furthermore, equality holds only if $\cF$ is isomorphic to one of $\cA(n, k, d)$ or $\cH(n, k, d)$. 
\end{theorem}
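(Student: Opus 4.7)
The plan is to give a structural analysis of a nontrivial $d$-wise intersecting family $\cF\subseteq\binom{[n]}{k}$ in three stages, concluding that at the extremum $\cF$ must be isomorphic to $\cA(n,k,d)$ or $\cH(n,k,d)$. The first stage locates a small transversal. Since $\cF$ is nontrivial, $\bigcap_{F\in\cF}F=\emptyset$, so one can pick $F_1,\ldots,F_{d-1}\in\cF$ for which $T:=F_1\cap\cdots\cap F_{d-1}$ has minimum cardinality. Applying $d$-wise intersection to $\{F_1,\ldots,F_{d-1},G\}$ for arbitrary $G\in\cF$ forces $T\ne\emptyset$ and $G\cap T\ne\emptyset$, so $T$ is a transversal for $\cF$ and $\cF$ lies inside the family of $k$-sets meeting $T$.

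The second stage is a dichotomy based on how $\cF$ is distributed around $T$. In the \textbf{concentrated case}, some $(d-1)$-subset $C\subseteq T$ is contained in all but at most $d-1$ sets of $\cF$. The ``main'' sets then form a subfamily of $k$-sets through $C$, and $d$-wise intersection, together with nontriviality, constrains the few exceptional sets to the $\cH$-pattern (each exceptional set misses exactly one element of $C$ and must meet every main set in sufficiently many points). In the \textbf{spread case}, no such $(d-1)$-core exists; then an iterative application of $d$-wise intersection on $d$-tuples built from sets that witness the spreadout structure of $\cF$ inside $T$ should produce a $(d+1)$-element set $S$ with $|F\cap S|\ge d$ for every $F\in\cF$, yielding the $\cA$-pattern.

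The third stage is the count. In the concentrated case, $|\cF|\le\binom{n-d+1}{k-d+1}-\binom{n-k-1}{k-d+1}+(d-1)=|\cH(n,k,d)|$, where the subtractive term encodes the requirement (derived from nontriviality propagated through the argument) that the main sets meet $[d,k+1]$. In the spread case, $|\cF|\le|\cA(n,k,d)|$ directly. Equality is traced back to $\cF\cong\cA$ or $\cH$ by tightening the inequalities along the way.

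The hardest step I expect is the dichotomy in stage two, specifically producing the $(d+1)$-element kernel $S$ in the spread case from the hypothesis that no $(d-1)$-core exists; this should proceed by a sunflower-style exchange argument inside $T$, swapping elements between sets while preserving $d$-wise intersection. The hypothesis $n>(1+d/2)(k-d+2)$ enters in stage three to absorb the ``exceptional'' terms into the main counts and to rule out any intermediate hybrid configurations that the dichotomy does not outright forbid; tuning this numerical comparison is the delicate part that must be carried out to substantially improve the range in Theorem~\ref{thm:ov} while keeping the proof short.
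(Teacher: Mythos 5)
Your outline has a genuine gap at its central step, the stage-two dichotomy. In the ``spread'' case you assert that the absence of a $(d-1)$-element core should force a $(d+1)$-element kernel $S$ with $|F\cap S|\ge d$ for every $F\in\cF$, i.e.\ the $\cA$-pattern. This is false as stated: the families $\cM(n,k,d,r)$ with $r\ge 2$ defined in Section~4 (e.g.\ for $d=3$, $r=2$, the family $\{F: 1\in F,\ |F\cap[2,6]|\ge 3\}\cup\{F\in\binom{[2,n]}{k}: F\supseteq[2,6]\}$) are nontrivial $d$-wise intersecting for every admissible $n$, have no $(d-1)$-set contained in all but at most $d-1$ members, and admit no $(d+1)$-set $S$ meeting every member in at least $d$ points. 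Such ``intermediate'' families exist throughout the range $n>\left(1+\frac{d}{2}\right)(k-d+2)$ (they are merely smaller than $\max\{|\cA|,|\cH|\}$ there, and for smaller $n$ they can even beat both, as $\cM(11,7,3,3)$ shows), so your dichotomy cannot produce the two target structures; it can only be repaired by a genuinely quantitative argument that bounds the size of all such configurations, and that is exactly the hard content you have deferred to ``tuning the numerical comparison.'' The concentrated case also assumes, rather than proves, that the exceptional sets number at most $d-1$ and each miss exactly one element of $C$; this requires a Hilton--Milner-type argument that is not supplied.

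For contrast, the paper's proof takes a completely different and much shorter route: Lemma~\ref{lem:msetslem} (applied with $m=2$, $t=1$) shows that a nontrivial $d$-wise intersecting family is a nontrivial $(d-1)$-intersecting family, and then the complete nontrivial-intersection theorem of Ahlswede--Khachatrian (Theorem~\ref{thm:akntcithm}) together with one case of their complete intersection theorem (Theorem~\ref{thm:aktcithm}) gives the bound and uniqueness; the threshold $\left(1+\frac{d}{2}\right)(k-d+2)$ is precisely the Ahlswede--Khachatrian threshold $\left(\frac{t+d+1}{2}\right)(k-t-d+3)$ at $t=1$. A self-contained structural proof along your lines would essentially have to reprove that nontrivial complete intersection theorem in this range, and nothing in the sketch addresses how the intermediate configurations would be eliminated or outweighed.
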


In fact, our argument readily extends to provide a bound for the maximum size of a nontrivial $d$-wise $t$-intersecting family. 

\begin{theorem}\label{thm:dwisetintthm}
If $\cF\subseteq \binom{[n]}{k}$ is a nontrivial $d$-wise $t$-intersecting family with $k\ge t+d-1$, $d\ge 2$ and $t\ge 1$, then for $n > \left(\frac{t+d+1}{2}\right)(k-t-d+3)$, we have
\[|\cF| \le \max\left\{|\cH(n, k, t+d-1)|, |\cA(n, k, t+d-1)|\right\}.\]

Furthermore, equality holds only if $\cF$ is isomorphic to either $\cA(n, k, t+d-1)$ or $\cH(n, k, t+d-1)$. 
\end{theorem}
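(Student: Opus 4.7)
The plan is to reduce Theorem~\ref{thm:dwisetintthm} to the $s = 0$ case via the common intersection core, and then adapt the argument of Theorem~\ref{thm:dwiseintthm}. Let $\cF \subseteq \binom{[n]}{k}$ be a nontrivial $d$-wise $t$-intersecting family, set $T := \bigcap_{F \in \cF} F$, and let $s := |T| \le t - 1$ (by nontriviality).

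\emph{Reduction to $s = 0$.} When $s \ge 1$, consider $\cF' := \{F \setminus T : F \in \cF\} \subseteq \binom{[n] \setminus T}{k - s}$. Since every $d$-wise intersection of sets in $\cF$ contains $T$ and has size at least $t$, the family $\cF'$ is $d$-wise $(t-s)$-intersecting with $\bigcap \cF' = \emptyset$ (hence nontrivial), and $|\cF| = |\cF'|$. The hypothesis $n > \tfrac{t+d+1}{2}(k-t-d+3)$ transfers to $n - s > \tfrac{(t-s)+d+1}{2}((k-s) - (t-s) - d + 3)$, since the excess equals $s \cdot \tfrac{k-t-d+1}{2} \ge 0$ from $k \ge t + d - 1$. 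Under the map $F' \mapsto F' \cup T$, the extremal families $\cH(n-s, k-s, (t-s)+d-1)$ and $\cA(n-s, k-s, (t-s)+d-1)$ embed as sub-families of $\cH(n, k, t+d-1)$ and $\cA(n, k, t+d-1)$, respectively, so a bound for $\cF'$ yields the desired bound for $\cF$. This reduces the problem to the case where $\cF$ has empty common intersection, with possibly smaller $t$ (and the base $t = 1$ being Theorem~\ref{thm:dwiseintthm}).

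\emph{The $s = 0$ case and the main obstacle.} Here $\cF$ has $\bigcap \cF = \emptyset$ and is $d$-wise $t$-intersecting. We adapt the argument of Theorem~\ref{thm:dwiseintthm} by substituting $d \to t + d - 1$ in the extremal analysis. The steps are: first apply a shifting argument (which preserves both the $d$-wise $t$-intersecting property and $|\cF|$) to reduce to a left-compressed $\cF$; then classify the sets of $\cF$ by their intersection with the ``core'' $[t+d-1]$; finally, bound the ``exceptional'' sets (those not of an $\cA$- or $\cH$-type shape) by exploiting the $d$-wise $t$-intersecting condition. The main obstacle is this detailed Hilton-Milner-style case analysis, in which one must show that the stronger $t$-intersecting hypothesis effectively promotes the extremal parameter from $d$ (as in Theorem~\ref{thm:dwiseintthm}) to $t + d - 1$, so that the extremal families become $\cA(n, k, t+d-1)$ and $\cH(n, k, t+d-1)$ rather than $\cA(n, k, d)$ and $\cH(n, k, d)$. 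The equality characterization then follows by tracing through the shifting and counting, forcing $\cF \cong \cA(n, k, t+d-1)$ or $\cF \cong \cH(n, k, t+d-1)$.
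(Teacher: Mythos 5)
There is a genuine gap: the heart of the theorem is never proved. Your reduction to the empty-kernel case (removing $T=\bigcap_{F\in\cF}F$ and adjusting the parameters) is arithmetically sound, but it only relocates the problem; the ``$s=0$ case'' is then dispatched with a plan --- shifting, classifying sets by their trace on $[t+d-1]$, and a ``detailed Hilton--Milner-style case analysis'' --- that you explicitly identify as the main obstacle and do not carry out. That case analysis is precisely the substantive content of the theorem: in the range $n>\left(\frac{t+d+1}{2}\right)(k-t-d+3)$, which is far tighter than what delta-system or naive shifting arguments reach, classifying the extremal families amounts to re-proving a hard case of the Ahlswede--Khachatrian complete (nontrivial) intersection theorems. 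Likewise, the ``promotion'' of the parameter from $d$ to $t+d-1$, which you flag as something ``one must show,'' is asserted rather than proved. (There is also a circularity issue with invoking Theorem~\ref{thm:dwiseintthm} as the base case: in this paper that theorem is itself the special case $t=1$ of the present statement, not an independently established result.)

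For comparison, the paper's proof avoids both your reduction and any case analysis. The key step is Lemma~\ref{lem:msetslem}: using nontriviality, for each element of an $m$-wise intersection outside the kernel one picks a member of $\cF$ avoiding it, and the $d$-wise $t$-intersecting condition forces $|\bigcap_{i=1}^m A_i|\ge t+d-m$; with $m=2$ this says $\cF$ is $(t+d-2)$-intersecting (Corollary~\ref{cor:dwisecor}), and since $|\bigcap_{F\in\cF}F|\le t-1<t+d-2$ it is a \emph{nontrivial} $(t+d-2)$-intersecting family. The bound and the uniqueness statement then follow immediately from Theorem~\ref{thm:akntcithm} when $n\ge (t+d-1)(k-t-d+3)$ and from Theorem~\ref{thm:aktcithm} when $\left(\frac{t+d+1}{2}\right)(k-t-d+3)<n<(t+d-1)(k-t-d+3)$. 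Unless you either supply this short promotion argument together with a citation of the Ahlswede--Khachatrian theorems, or actually execute the extremal classification you defer, the proposal does not constitute a proof.
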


The case $d=2$ of Theorem \ref{thm:dwisetintthm} is part of the complete nontrivial intersection theorem of Ahlswede and Khachatrian~\cite{AK2}, which is a key part of our proofs of Theorems \ref{thm:dwiseintthm} and \ref{thm:dwisetintthm}. We state part of their result, for $n > (t+1)(k-t+1)$. 

\begin{theorem}[Ahlswede-Khachatrian \cite{AK2}]\label{thm:akntcithm}
Let $\cF \subset \binom{[n]}{k}$ be a nontrivial $t$-intersecting family. Then, for every $n > (t+1)(k-t+1)$, the following holds:

(i) If $k\le 2t+1$, then \[|\cF| \le |\cA(n, k, t+1)|,\]
and $\cA(n, k, t+1)$ is up to isomorphism the unique optimal family. 

(ii) If $k > 2t+1$, then \[|\cF| \le \max\{|\cA(n, k, t+1)|, |\cH(n, k, t+1)|\},\]
and these are the only optimal families, up to isomorphism. 
\end{theorem}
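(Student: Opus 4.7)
The plan is to follow Frankl's shifting approach combined with a careful analysis of the $t$-covering number of $\cF$. First, I would apply the standard left-compression operations $S_{ij}$ to $\cF$; each preserves $|\cF|$ and the $t$-intersecting property but may destroy nontriviality. Either the final shifted family $\cF^*$ remains nontrivially $t$-intersecting, or there is a last step at which shifting converts a nontrivial family into a trivial one. In the latter case, Frankl's ``generating sets'' technique lets one replace $\cF$ by a shifted family differing from $\cF^*$ in a controlled way, reducing everything to the shifted setting.

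Assume $\cF^*$ is shifted and nontrivially $t$-intersecting, and let $\tau$ be the minimum size of a set $T$ with $|T \cap F| \ge t$ for every $F \in \cF^*$; nontriviality forces $\tau \ge t+1$. For shifted families one may assume a minimal $t$-cover is the initial segment $[\tau]$. I would then split on $\tau$. If $\tau \ge t+2$, the greedy $t$-cover upper bound $|\cF^*| \le \binom{\tau}{t}\binom{n-\tau}{k-t-1}$ is already smaller than both $|\cA(n, k, t+1)|$ and $|\cH(n, k, t+1)|$ in the range $n > (t+1)(k-t+1)$. If $\tau = t+1$, a finer structural analysis of the traces of $\cF^*$ on $[t+2]$ shows that either $\cF^* \subseteq \cA(n, k, t+1)$, or $[t-1] \subset F$ for every $F \in \cF^*$ and $\cF^* \subseteq \cH(n, k, t+1)$, after direct counting of the allowed tails outside $[t-1]$. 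In case (i) of the theorem ($k \le 2t+1$) the $\cA$ family dominates, so the bound reduces to $|\cA|$ and $\cH$-type configurations are excluded by the range of $k$.

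The main obstacle will be the case $\tau = t+1$: one must rule out shifted hybrids that mix features of $\cA$ and $\cH$ and show that any such family is strictly smaller than $\max\{|\cA|, |\cH|\}$. The comparison hinges on the fact that the gap between either candidate extremal family and any deviating shifted structure is of order $\binom{n-t-2}{k-t-2}$, which dominates the lower-order terms precisely when $n > (t+1)(k-t+1)$; this is exactly where the threshold in the hypothesis comes from. For the uniqueness statement, equality in the size bound forces $\cF^*$ to equal either $\cA(n, k, t+1)$ or $\cH(n, k, t+1)$ on the nose, and then reversing the shifts preserves the isomorphism type by standard arguments about decompression of extremal families.
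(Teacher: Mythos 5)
This statement is not proved in the paper at all: it is the complete nontrivial-intersection theorem of Ahlswede and Khachatrian, quoted verbatim from \cite{AK2} and used as a black box (together with Theorem~\ref{thm:aktcithm}) to derive Theorems~\ref{thm:dwiseintthm} and \ref{thm:dwisetintthm}. So what you have written is not an alternative to an argument in this paper but an attempt to reprove a deep external result, and as it stands it is a sketch with several genuine gaps rather than a proof. First, the reduction to the shifted setting is exactly the notorious difficulty for nontrivial intersection theorems: the compressions $S_{ij}$ can turn a nontrivial family into a trivial one, and the sentence ``Frankl's generating sets technique lets one replace $\cF$ by a shifted family differing from $\cF^*$ in a controlled way'' is an appeal to the very machinery (Frankl's generating sets, or Ahlswede--Khachatrian's pushing--pulling) whose detailed execution constitutes most of the known proofs; nothing in your outline supplies it.

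Second, the quantitative steps do not work as claimed. If $T$ is a $t$-cover of size $\tau$, you only know $|F\cap T|\ge t$ for each $F$, so the cover bound is $\sum_{j\ge t}\binom{\tau}{j}\binom{n-\tau}{k-j}$, whose leading term $\binom{\tau}{t}\binom{n-\tau}{k-t}$ is of order $n^{k-t}$, whereas $|\cA(n,k,t+1)|=(t+2)\binom{n-t-2}{k-t-1}+\binom{n-t-2}{k-t-2}$ is only of order $n^{k-t-1}$; your stated bound $\binom{\tau}{t}\binom{n-\tau}{k-t-1}$ presupposes $|F\cap T|\ge t+1$, which a $t$-cover does not give, so the ``easy'' disposal of the case $\tau\ge t+2$ fails, and near the threshold $n\approx(t+1)(k-t+1)$ even correct versions of such bounds are delicate because the competing Frankl families $\{F:|F\cap[t+2i]|\ge t+i\}$ have sizes differing only in lower-order terms there. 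Third, the asserted dichotomy for $\tau=t+1$ (``either $\cF^*\subseteq\cA$ or $[t-1]\subset F$ for all $F$ and $\cF^*\subseteq\cH$'') is precisely the structural heart of the theorem and is stated without argument; likewise the uniqueness claim that ``reversing the shifts preserves the isomorphism type by standard arguments'' is not standard for nontrivial families, since decompression is where extremal nontrivial families can be lost or created. In short, the paper's treatment is simply a citation of \cite{AK2}, and your outline, while pointing in the historically correct direction (shifting, covers, generating sets), omits the substantive parts of that proof and contains an incorrect counting step.
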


We also use one case of the complete intersection theorem of Ahlswede and Khachatrian~\cite{AK1, AK3} in the proof of Theorem~\ref{thm:dwisetintthm}. 

\begin{theorem}[Ahlswede-Khachatrian \cite{AK1, AK3}]\label{thm:aktcithm}
Let $\cF \subset \binom{[n]}{k}$ be a $t$-intersecting family with $k\ge t$. Then, for $(k-t+1)(2+\frac{t-1}{2}) < n < (k-t-1)(t+1)$, we have 
\[|\cF| \le |\cA(n, k, t+1)|,\]
and $\cA(n, k, t+1)$ is the unique optimal family, up to isomorphism.
\end{theorem}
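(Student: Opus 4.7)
The plan is to derive this as a special case of the complete Ahlswede--Khachatrian intersection theorem, using the shifting technique together with a comparison among the Frankl families
\[
\cF^{(r)} := \left\{F\in\binom{[n]}{k}:\ |F\cap[t+2r]|\ge t+r\right\} \qquad (r\ge 0),
\]
each of which is $t$-intersecting and for which $\cA(n,k,t+1)=\cF^{(1)}$. The strategy has three steps: (a) reduce to a left-shifted $\cF$ via the compressions $S_{ij}$, which preserve both cardinality and the $t$-intersection property; (b) invoke the Ahlswede--Khachatrian ``pushing--pulling'' argument to show that every extremal shifted $t$-intersecting family equals some $\cF^{(r)}$; and (c) compare the sizes $|\cF^{(r)}|$ in the stated $n$-range to conclude that $r=1$ wins uniquely.

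For step (c) one writes
\[
|\cF^{(r)}| = \sum_{j=0}^{\min(r,\,k-t-r)} \binom{t+2r}{t+r+j}\binom{n-t-2r}{k-t-r-j}
\]
and compares successive sizes by elementary binomial manipulations. The critical value at which $|\cF^{(r)}|$ and $|\cF^{(r+1)}|$ agree is $n_r^\ast := (k-t+1)\bigl(2+(t-1)/(r+1)\bigr)$; for $r=0$ this gives the classical threshold $n_0^\ast = (t+1)(k-t+1)$, and for $r=1$ it gives $n_1^\ast = (k-t+1)(t+3)/2$, which is exactly the lower bound in the statement. The upper bound $n<(k-t-1)(t+1)$ sits below $n_0^\ast$, so throughout the stated open interval $|\cF^{(1)}|>|\cF^{(0)}|$; a monotonicity check handles $|\cF^{(1)}|>|\cF^{(r)}|$ for all $r\ge 2$, and each inequality is strict, yielding uniqueness up to isomorphism.

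The principal obstacle is step (b), namely the structural claim that an extremal shifted $t$-intersecting family must already coincide with some $\cF^{(r)}$. This is the substantive content of the complete intersection theorem and requires the full Ahlswede--Khachatrian ``pushing--pulling'' method: one tracks how local modifications of $\cF$, obtained by swapping elements between the ``head'' coordinates $[t+2r]$ and the ``tail'' $[t+2r+1,n]$, affect $|\cF|$, and then optimizes over all such moves. Granting this structural reduction, steps (a) and (c) are routine and together yield the claimed bound with $\cA(n,k,t+1)=\cF^{(1)}$ as the unique extremal family.
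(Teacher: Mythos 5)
There is no in-paper proof to compare against here: Theorem~\ref{thm:aktcithm} is quoted verbatim from Ahlswede and Khachatrian \cite{AK1, AK3} (it is the $r=1$ case of their complete intersection theorem) and the paper uses it as a black box. Your proposal is therefore not an alternative argument but an outline of the known Ahlswede--Khachatrian proof, and as a proof it has a genuine gap: step (b), the claim that an extremal (shifted) $t$-intersecting family must be one of the Frankl families $\cF^{(r)}$, is exactly the substantive content of the complete intersection theorem. You acknowledge this and then write ``granting this structural reduction,'' but granting it is tantamount to granting the theorem you are asked to prove; the pushing--pulling analysis is not a routine lemma one can wave at, it occupies the bulk of \cite{AK1} and \cite{AK3}. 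Steps (a) and (c) are fine as far as they go: compressions preserve $t$-intersection and cardinality, your formula for $|\cF^{(r)}|$ is correct, the critical values $n_r^\ast=(k-t+1)\bigl(2+(t-1)/(r+1)\bigr)$ are the right thresholds, and since the stated interval lies strictly between $n_1^\ast$ and $n_0^\ast$ the size comparison does single out $r=1$ (the paper's upper bound $(k-t-1)(t+1)$ is even a bit smaller than the true AK threshold $(k-t+1)(t+1)$, so the comparison is safe).

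Two further cautions if you intend to flesh this out. First, uniqueness up to isomorphism does not follow merely from strictness of the inequalities among the $|\cF^{(r)}|$: shifting can identify non-isomorphic families, so transferring uniqueness from the shifted setting back to the original family requires a separate argument (Ahlswede and Khachatrian handle this with additional work on generating sets). Second, your formulation of step (b) as an $n$-free structural statement (``every extremal shifted family equals some $\cF^{(r)}$'') is not how the argument actually runs; the pushing--pulling comparisons are intertwined with the value of $n$, so the clean separation into ``structure first, counting second'' that your outline suggests is not available without reproducing essentially the whole of their proof.
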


Theorem \ref{thm:dwisetintthm} allows us to conclude a version of the Erd\H{o}s-Ko-Rado theorem~\cite{EKR} for $d$-wise $t$-intersecting families.

\begin{theorem}\label{thm:ekrdwisetintthm}
If $\cF\subseteq \binom{[n]}{k}$ is a $d$-wise $t$-intersecting family with $k\ge t$, $d\ge 2$ and $t\ge 1$, then for $n > (t+d-1)(k-t-d+3)$, we have
\[|\cF| \le \binom{n-t}{k-t},\]
and equality holds only if $\cF$ is isomorphic to $\{F \in \binom{[n]}{k}:\ [t] \subseteq F\}$.
\end{theorem}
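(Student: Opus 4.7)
The plan is to split into the trivial case $|\bigcap_{F\in\cF}F|\ge t$ and the nontrivial case, applying Theorem~\ref{thm:dwisetintthm} to the latter and inducting on $d$. If $\cF$ is trivial, fixing a $t$-subset $T\subseteq\bigcap_{F\in\cF}F$ immediately yields $|\cF|\le\binom{n-t}{k-t}$, with equality iff $\cF=\{F:T\subseteq F\}$, which gives the bound and the characterization.

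For the nontrivial case, I first show $k\ge t+d-1$ by a counting argument. Choose a minimal collection $F_1,\ldots,F_r\in\cF$ with $|I|:=|\bigcap_i F_i|<t$; since $\cF$ is $d$-wise $t$-intersecting we have $r\ge d+1$, and minimality forces $|(\bigcap_{i\ne j}F_i)\setminus F_j|\ge t-|I|$ for each $j$. Selecting $t-|I|$ such elements for each $j=2,\ldots,r$ produces $(r-1)(t-|I|)\ge d(t-|I|)$ pairwise distinct elements, all lying in $F_1$ and disjoint from $I$, so $k=|F_1|\ge|I|+d(t-|I|)=dt-(d-1)|I|\ge t+d-1$ (using $|I|\le t-1$).

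With $k\ge t+d-1$ and $n>(t+d-1)(k-t-d+3)\ge\tfrac{t+d+1}{2}(k-t-d+3)$ (valid since $t+d\ge 3$), Theorem~\ref{thm:dwisetintthm} gives $|\cF|\le\max\{|\cA(n,k,t+d-1)|,|\cH(n,k,t+d-1)|\}$. To show this maximum is strictly less than $\binom{n-t}{k-t}$, I induct on $d$. For the step $d\ge 3$, I claim both $\cA(n,k,t+d-1)$ and $\cH(n,k,t+d-1)$ are nontrivial $(d-1)$-wise $(t+1)$-intersecting: for $\cA$ one computes $|\bigcap_{i=1}^{d-1}A_i\cap[t+d]|\ge(t+d)-(d-1)=t+1$, and for $\cH$ a case analysis over the mix of ``main'' and ``special'' members (using that each main meets $[t+d-1,k+1]$) shows every $(d-1)$-wise intersection has size at least $t+1$. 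The inductive range $n>((t+1)+(d-1)-1)(k-(t+1)-(d-1)+3)=(t+d-1)(k-t-d+3)$ coincides with our hypothesis, so the inductive hypothesis applied with parameters $(d-1,t+1)$ gives $|\cA|,|\cH|\le\binom{n-t-1}{k-t-1}$, strict by nontriviality, hence strictly less than $\binom{n-t}{k-t}$.

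The base case $d=2$ is a direct binomial computation. One has $\binom{n-t}{k-t}-|\cA(n,k,t+1)|=\binom{n-t-2}{k-t}-t\binom{n-t-2}{k-t-1}$, which is positive iff $(n-k-1)/(k-t)>t$, i.e.\ $n>(t+1)(k-t+1)$; and $\binom{n-t}{k-t}-|\cH(n,k,t+1)|=\binom{n-k-1}{k-t}-t$, which is also positive throughout this range by a straightforward lower bound on $\binom{n-k-1}{k-t}$. The main obstacle is the case enumeration needed to verify the $(d-1)$-wise $(t+1)$-intersecting property of $\cH$ across mixed collections of main and special members, but once this enumeration is performed the induction runs cleanly.
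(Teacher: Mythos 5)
Your proof is correct, but it takes a genuinely different (and much heavier) route than the paper's. The paper's proof is two lines: by Corollary~\ref{cor:dwisecor} a nontrivial $d$-wise $t$-intersecting family is $(t+d-2)$-intersecting, so Theorem~\ref{thm:ekrtintthm} (the Frankl--Wilson exact $t$-intersecting EKR theorem) gives $|\cF|\le\binom{n-t-d+2}{k-t-d+2}\le\binom{n-t}{k-t}$ for $n>(t+d-1)(k-t-d+3)$, with the second inequality strict when $d\ge 3$, so equality forces $\cF$ trivial and hence a star. You instead route everything through the nontrivial Hilton--Milner-type Theorem~\ref{thm:dwisetintthm} (hence through the Ahlswede--Khachatrian machinery) and then run an induction on $d$, showing that $\cA(n,k,t+d-1)$ and $\cH(n,k,t+d-1)$ are themselves nontrivial $(d-1)$-wise $(t+1)$-intersecting, with the range $(t+d-1)(k-t-d+3)$ reproducing itself under $(d,t)\mapsto(d-1,t+1)$, and with a direct binomial comparison of $\cA(n,k,t+1)$ and $\cH(n,k,t+1)$ against the star at $d=2$; I checked these steps (the distinctness of the elements in your $k\ge t+d-1$ count, the case analysis for $\cH$, and the identities $\binom{n-t}{k-t}-|\cA(n,k,t+1)|=\binom{n-t-2}{k-t}-t\binom{n-t-2}{k-t-1}$ and $\binom{n-t}{k-t}-|\cH(n,k,t+1)|=\binom{n-k-1}{k-t}-t$) and they all hold, so the argument is sound. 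Two remarks: your preliminary claim that nontriviality forces $k\ge t+d-1$ is exactly Lemma~\ref{lem:msetslem} with $m=1$, so the ad hoc counting argument can be replaced by a citation; and since $\binom{n-t-1}{k-t-1}<\binom{n-t}{k-t}$ whenever $n>k$, you do not actually need the nontriviality of $\cA$ and $\cH$ (i.e.\ strictness in the inductive application) to close the induction. What your approach buys is a self-contained strictness/uniqueness analysis, including an explicit derivation of the $d=2$ case from the nontrivial complete intersection theorem rather than from Wilson's theorem, and the exact threshold $n>(t+1)(k-t+1)$ emerging from the $\cA$-comparison; what it costs is length and reliance on the stronger Ahlswede--Khachatrian input where the paper needs only Corollary~\ref{cor:dwisecor} and Theorem~\ref{thm:ekrtintthm}.
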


Note that the Erd\H{o}s-Ko-Rado theorem is essentially the case $d=2$, $t=1$ of Theorem \ref{thm:ekrdwisetintthm}. No version of the Erd\H{o}s-Ko-Rado theorem for $d$-wise $t$-intersecting families with such an explicit range as $n \ge Ck$ (where $C$ depends only on $t$ and $d$) has previously appeared in the literature.  There have been a number of results that are of a more asymptotic nature \cite{T} or consider specific values of $d$ or $t$. Tokushige~\cite{Tok3} proved that the maximum size of a $3$-wise $t$-intersecting family is $\binom{n-t}{k-t}$ for $t\ge 26$, $n$ sufficiently large and the optimal range of roughly $n\ge \sqrt{t}k$. 

The range for which Theorem~\ref{thm:ekrdwisetintthm} holds is certainly not optimal unless $d=2$. We can show that the conclusion of Theorem~\ref{thm:ekrdwisetintthm} holds for $n \ge ck$, where $c = c_{t, d} < t+d-1$ is a slightly better constant (depending on $t$ and $d$) and $n$ is sufficiently large. 

In order to state the result, we define the polynomial $f_{t, d}(x) = (1-x)^{t+d-3} - x^{d-2}$ for $t\ge 2$ and $d\ge 3$. Let $\beta_{t, d} \in (0, \frac12)$ be a root of $f_{t, d}$, so that $\beta_{t, d}$ satisfies
\begin{equation}\label{eqn:betaeqn}
(1-\beta_{t, d})^{t+d-3} - \beta_{t, d}^{d-2} = 0.
\end{equation}

It is easy to check that $\frac{1}{t+1} < \beta_{t, d} < \frac12$ for $d\ge 3$ and $t\ge 2$.  

\begin{theorem}\label{thm:improvedekrdwisetintthm}
If $\cF\subseteq \binom{[n]}{k}$ is a $d$-wise $t$-intersecting family with $k\ge t$, $d\ge 3$ and $t\ge 2$, then for any constant $c$ with $c > 1/\beta_{t, d}$, there exists an $n_0$ such that if $n \ge n_0$ and $n \ge c k$, we have
\[|\cF| \le \binom{n-t}{k-t},\]
and equality holds only if $\cF$ is isomorphic to $\{F \in \binom{[n]}{k}:\ [t] \subseteq F\}$.
\end{theorem}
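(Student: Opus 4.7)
Plan: If $\cF$ is trivially $d$-wise $t$-intersecting (meaning some fixed $t$-element set is contained in every member), then $|\cF| \le \binom{n-t}{k-t}$ is immediate, with equality only for the star. Otherwise $\cF$ is nontrivial, and by Frankl's shifting I may assume that $\cF$ is left-shifted.

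For $c > (t+d+1)/2$, I would apply Theorem \ref{thm:dwisetintthm} to bound $|\cF| \le \max\{|\cH(n,k,t+d-1)|, |\cA(n,k,t+d-1)|\}$ and finish by a direct asymptotic comparison. Writing $\beta = k/n$, as $n,k\to\infty$ one has
\[
\frac{|\cH(n,k,t+d-1)|}{\binom{n-t}{k-t}} = (1+o(1))\beta^{d-2}
\quad\text{and}\quad
\frac{|\cA(n,k,t+d-1)|}{\binom{n-t}{k-t}} = (1+o(1))\beta^{d-1}\bigl((t+d)-(t+d-1)\beta\bigr),
\]
both strictly less than $1$ in the relevant subrange since $d\ge 3$ and $\beta = 1/c$ is bounded away from $1$. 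This disposes of the range $c > (t+d+1)/2$ and simultaneously yields the uniqueness statement.

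The harder part is the intermediate range $1/\beta_{t,d} < c \le (t+d+1)/2$, where Theorem \ref{thm:dwisetintthm} is not directly applicable. Here, the plan is to exploit that $\cF$ is in particular pairwise $t$-intersecting and to invoke Theorems \ref{thm:akntcithm} and \ref{thm:aktcithm}, obtaining $|\cF| \le |\cA(n,k,t+1)|$; note that this bound can exceed $\binom{n-t}{k-t}$ here. The key point is that $\cA(n,k,t+1)$ is itself not $d$-wise $t$-intersecting when $d\ge 3$, because any $d$ of its sets missing distinct elements of $[t+2]$ intersect in only $t+2-d<t$ elements of $[t+2]$. A Friedgut-type stability argument should then force a $d$-wise $t$-intersecting $\cF$ of size close to $|\cA(n,k,t+1)|$ to be essentially contained in a single star, yielding $|\cF|\le \binom{n-t}{k-t}$. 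The specific threshold $\beta_{t,d}$ arises from balancing, at $p=\beta_{t,d}$, the $p$-biased weights of the two competing substructures in this stability analysis, exactly matching the defining equation $(1-\beta)^{t+d-3}=\beta^{d-2}$. I expect the main technical obstacle to be making this stability step sharp enough to obtain the \emph{exact} bound $\binom{n-t}{k-t}$ rather than an asymptotic one, which is why the hypothesis only guarantees the inequality once $n\ge n_0=n_0(c,t,d)$ rather than for every $n\ge ck$.
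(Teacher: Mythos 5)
Your reduction to the nontrivial case and your treatment of the large-$c$ regime are fine, but the heart of the theorem is exactly the intermediate range $1/\beta_{t,d} < c \le (t+d+1)/2$, and there your argument has a genuine gap. First, the bound $|\cF|\le|\cA(n,k,t+1)|$ does not follow from Theorems \ref{thm:akntcithm} and \ref{thm:aktcithm} throughout that range: Theorem \ref{thm:aktcithm} requires $n > (k-t+1)(2+\frac{t-1}{2})$, and since $\beta_{t,d}$ can be much larger than $2/(t+3)$ (e.g.\ for $d=3$ and large $t$ one has $\beta_{t,3}\approx \log t/t$, so $1/\beta_{t,d}\approx t/\log t < (t+3)/2$), there are admissible values of $c$ for which $n\ge ck$ falls below that threshold; in that regime the Ahlswede--Khachatrian extremal $t$-intersecting families are larger frames $\cA_r$ with $r\ge 2$, so even the starting inequality of your plan is unavailable. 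Second, the ``Friedgut-type stability argument'' that is supposed to convert an upper bound exceeding $\binom{n-t}{k-t}$ into the exact bound is not carried out, and it is precisely the hard step: you would need an exact (not asymptotic) stability statement for pairwise $t$-intersecting families strong enough to exploit the $d$-wise condition, and nothing in your sketch explains how $\beta_{t,d}$, defined by $(1-\beta)^{t+d-3}=\beta^{d-2}$, would emerge from such an analysis.

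The paper avoids all of this with a much shorter route that your proposal misses: in the nontrivial case, Corollary \ref{cor:dwisecor} upgrades $\cF$ to a $(t+d-2)$-intersecting family (you only use pairwise $t$-intersection in the intermediate range, discarding the extra strength of the $d$-wise hypothesis), and then Frankl's universal bound (Theorem \ref{thm:n-1k-tthm}), valid already for $n\ge 2k-(t+d-2)+1$ and hence in the whole range $n\ge ck$ with $c>1/\beta_{t,d}>2$, gives $|\cF|\le\binom{n-1}{k-t-d+2}$. Lemma \ref{pbetalem} then shows $\binom{n-1}{k-t-d+2}<\binom{n-t}{k-t}$ for $k/n<\beta_{t,d}$ and $n$ large; this binomial comparison is exactly where the equation $(1-\beta_{t,d})^{t+d-3}=\beta_{t,d}^{d-2}$ comes from, and it handles the entire range at once, with uniqueness following since every nontrivial family is strictly smaller than the star. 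To repair your proof you would either have to supply the missing stability theorem or replace the intermediate-range argument by this Corollary \ref{cor:dwisecor} plus Theorem \ref{thm:n-1k-tthm} combination.
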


For a set $F\in 2^{[n]}$, for $0 < p < 1$, we define the product measure $\mu_p(F) = p^{|F|}(1-p)^{n - |F|}$, and for a family $\cF \subset 2^{[n]}$, we define $\mu_p(\cF) = \sum_{F\in \cF}\mu_p(F)$. We also obtain a version of Theorem \ref{thm:ekrdwisetintthm} for $d$-wise, $t$-intersecting families for the measure $\mu_p$ from Theorem~\ref{thm:improvedekrdwisetintthm}. 

\begin{theorem}\label{thm:pekrdwisetintthm}
Let $\cF \subset 2^{[n]}$ be a $d$-wise, $t$-intersecting family. Then, for $0 < p < \beta_{t, d}$, 
\[\mu_p(\cF) \le p^t.\]
\end{theorem}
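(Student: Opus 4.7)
The plan is to reduce the $p$-biased statement to the uniform (fixed $k$) statement proved in Theorem~\ref{thm:improvedekrdwisetintthm} via the standard blow-up/sampling construction. Given $\cF \subset 2^{[n]}$ that is $d$-wise $t$-intersecting and $p < \beta_{t,d}$, I would pick a large integer $N$, set $k_N = \lfloor pN \rfloor$, and define
\[
\cG_N \;=\; \left\{ G \in \binom{[N]}{k_N} \;:\; G \cap [n] \in \cF \right\}.
\]

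First I would check that $\cG_N$ inherits the $d$-wise $t$-intersecting property. If $G_1, \ldots, G_d \in \cG_N$, then each $G_i \cap [n]$ lies in $\cF$, so $\bigl|\bigcap_{i=1}^d (G_i \cap [n])\bigr| \ge t$. Since $\bigcap_{i=1}^d (G_i \cap [n]) = \bigl(\bigcap_{i=1}^d G_i\bigr) \cap [n] \subseteq \bigcap_{i=1}^d G_i$, we get $\bigl|\bigcap_{i=1}^d G_i\bigr| \ge t$. Next, partitioning $\cG_N$ by the value of $G \cap [n]$ yields
\[
\frac{|\cG_N|}{\binom{N}{k_N}} \;=\; \sum_{F \in \cF} \frac{\binom{N-n}{k_N - |F|}}{\binom{N}{k_N}},
\]
and a routine asymptotic computation shows that each summand tends to $p^{|F|}(1-p)^{n-|F|} = \mu_p(F)$ as $N \to \infty$ (with $k_N/N \to p$). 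Because $\cF$ is finite, summing gives $|\cG_N|/\binom{N}{k_N} \to \mu_p(\cF)$.

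Finally I would invoke Theorem~\ref{thm:improvedekrdwisetintthm}. Since $p < \beta_{t,d}$, pick a constant $c$ with $1/\beta_{t,d} < c < 1/p$. For all sufficiently large $N$, we have both $N \ge n_0$ and $k_N/N \le p < 1/c$, so $N \ge c k_N$ and $k_N \ge t$. Theorem~\ref{thm:improvedekrdwisetintthm} then yields $|\cG_N| \le \binom{N-t}{k_N-t}$, and since $\binom{N-t}{k_N-t}/\binom{N}{k_N} \to p^t$, letting $N \to \infty$ gives $\mu_p(\cF) \le p^t$.

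The argument is mostly bookkeeping: the substantive ingredient is Theorem~\ref{thm:improvedekrdwisetintthm}, and the only real design choice is the constant $c$, which exists precisely because $p$ lies below the threshold $\beta_{t,d}$. The main (mild) obstacle is confirming that the $d$-wise $t$-intersecting property genuinely transfers to the lifted family $\cG_N$ on $[N]$, which is why the definition uses $G \cap [n] \in \cF$ rather than any more complicated correspondence.
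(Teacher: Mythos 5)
Your proposal is correct, and its substantive ingredient is the same as in the paper: everything is reduced to the uniform bound of Theorem~\ref{thm:improvedekrdwisetintthm}. The difference is in how the uniform statement is transferred to the $p$-biased measure. The paper simply cites Tokushige's transfer principle (Theorem~\ref{thm:tokthm}), which says that a uniform bound valid for all $k/n$ in a neighborhood of $p$ and $n$ large implies $\mu_p(\cF)\le p^t$ for all $n\ge t$; you instead prove the transfer by hand, lifting $\cF\subset 2^{[n]}$ to $\cG_N=\{G\in\binom{[N]}{\lfloor pN\rfloor}: G\cap[n]\in\cF\}$, checking that the $d$-wise $t$-intersecting property is inherited, and using the hypergeometric limit $|\cG_N|/\binom{N}{k_N}\to\mu_p(\cF)$ together with $\binom{N-t}{k_N-t}/\binom{N}{k_N}\to p^t$. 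Your route is self-contained and slightly more economical in its hypotheses (you only need the uniform bound along one sequence $(N,k_N)$ with $k_N/N\to p$, which Theorem~\ref{thm:improvedekrdwisetintthm} supplies since $N\ge ck_N$ for any fixed $c$ with $1/\beta_{t,d}<c<1/p$ and $N$ large), while the paper's citation keeps the argument to one line and packages the same sampling computation inside Tokushige's theorem. Both arguments inherit the implicit restriction $d\ge 3$, $t\ge 2$ under which $\beta_{t,d}$ and Theorem~\ref{thm:improvedekrdwisetintthm} are defined, so there is no loss there.
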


Theorem \ref{thm:pekrdwisetintthm} is a corollary of a general phenomenon whereby ``discrete" $k$-uniform results like Theorem \ref{thm:improvedekrdwisetintthm} can often be boosted to ``smooth" product measure results like Theorem \ref{thm:pekrdwisetintthm} (see, for instance, \cite{Tok1}). 

\subsection{Families missing one intersection}
In \cite[pg.~215]{FT}, Frankl and Tokushige made the following conjecture.

\begin{conjecture}\label{conj:missingoneintconj}
Suppose $\cF \subset \binom{[n]}{k}$ satisfies $|F \cap F'| \neq \ell$ for some $2\ell < k$, for every pair of sets $F, F' \in \cF$. Then, 
\[|\cF| \le \binom{n}{k-\ell-1}.\]
\end{conjecture}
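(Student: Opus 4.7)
My plan is to dichotomize on the intersection structure of $\cF$. If every pair $F, F' \in \cF$ satisfies $|F \cap F'| \ge \ell + 1$, then $\cF$ is an $(\ell + 1)$-intersecting family, and I would invoke the Ahlswede--Khachatrian theorem for nontrivial $t$-intersecting systems (Theorem~\ref{thm:akntcithm} with $t = \ell + 1$); if $\cF$ happens to be trivially $(\ell+1)$-intersecting one gets the even better bound $\binom{n-\ell-1}{k-\ell-1}$. For $n$ in the assumed range this yields $|\cF| \le \max\{|\cA(n, k, \ell + 2)|, |\cH(n, k, \ell + 2)|\}$; one verifies directly from Definition~\ref{defn:akdhkddefn} that for $n$ large compared to $k$ both of these sizes are at most $\binom{n}{k - \ell - 1}$, which closes this case.

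The harder case is when some pair $F_1, F_2 \in \cF$ has $m := |F_1 \cap F_2| \le \ell - 1$. Partition $[n]$ as $A \sqcup B \sqcup C \sqcup D$ with $A = F_1 \setminus F_2$, $B = F_2 \setminus F_1$, $C = F_1 \cap F_2$ and $D = [n] \setminus (F_1 \cup F_2)$, of sizes $k - m$, $k - m$, $m$ and $n - 2k + m$ respectively. For $G \in \cF$ set $(a, b, c, d) := (|G \cap A|, |G \cap B|, |G \cap C|, |G \cap D|)$; then $a + b + c + d = k$, and the two forbidden intersections force $a + c \ne \ell$ and $b + c \ne \ell$. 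Hence $|\cF|$ is bounded by the sum of $\binom{k - m}{a}\binom{k - m}{b}\binom{m}{c}\binom{n - 2k + m}{d}$ over admissible tuples, and I would compare this sum against $\binom{n}{k - \ell - 1}$.

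The main obstacle is that two forbidden-intersection constraints alone kill only an $O(n^{-\ell})$ fraction of all $k$-sets, which is nowhere near the savings needed to reach $\binom{n}{k - \ell - 1}$. To close the gap, my plan is to iterate: greedily build a maximal collection $F_1, \dots, F_r \in \cF$ with all pairwise intersections at most $\ell - 1$ and impose all $r$ near-independent non-$\ell$ constraints simultaneously, shrinking the admissible count by roughly $(1 - \Theta(n^{-\ell}))^r$; once $r \gg n^\ell$ this drops below $\binom{n}{k - \ell - 1}$. If no such long chain of small-intersection sets exists, then a sunflower-extraction argument should force $\cF$ back into the $(\ell+1)$-intersecting regime already handled in the first case. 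Balancing these two regimes cleanly, without losing a multiplicative constant in front of $\binom{n}{k - \ell - 1}$, is presumably the reason the authors only partially resolve Conjecture~\ref{conj:missingoneintconj}, and I expect the proof to go through only for $n$ sufficiently large (or in a suitable range of $n$, $k$ and $\ell$) rather than in full generality.
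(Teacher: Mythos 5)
The statement you set out to prove is Conjecture~\ref{conj:missingoneintconj}, which the paper itself does \emph{not} prove: it remains open, and the paper only establishes the special case where $k-\ell$ does not divide $\ell!$ (Theorem~\ref{thm:extensionthm}), via the polynomial method — applying Theorem~\ref{thm:ell=1thm} with $L=[0,k-1]\setminus\{\ell\}$, $f(x)=\binom{x-\ell-1}{k-\ell-1}$, and a prime power dividing $k-\ell$ but not $\ell!$. Measured against that, your first case (all pairwise intersections at least $\ell+1$) is fine for $n$ large — indeed you do not even need the nontrivial Ahlswede--Khachatrian theorem, since Wilson's theorem (Theorem~\ref{thm:ekrtintthm}) already gives $|\cF|\le\binom{n-\ell-1}{k-\ell-1}\le\binom{n}{k-\ell-1}$ — but this case is essentially trivial; the entire difficulty of the conjecture lives in your second case, where intersections below $\ell$ occur, and there your plan is not a proof.

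Concretely, the gap is this: bounding $|\cF|$ by the number of $k$-sets $G$ with $|G\cap F_i|\ne\ell$ for $r$ greedily chosen members $F_1,\dots,F_r$ and then writing the count as roughly $\bigl(1-\Theta(n^{-\ell})\bigr)^r\binom{n}{k}$ treats the $r$ constraints as nearly independent, which is unjustified — the events $|G\cap F_i|=\ell$ are strongly correlated through the overlaps of the $F_i$, and correlations can cut either way. Moreover, nothing in your argument controls $r$: a maximal subcollection with pairwise intersections at most $\ell-1$ could be very small, and even granting independence you would need $r\gtrsim n^{\ell}\log n$ to push the count below $\binom{n}{k-\ell-1}$, which is smaller than $\binom{n}{k}$ by a factor $\Theta(n^{\ell+1})$, not by a constant. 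The ``sunflower-extraction'' fallback that is supposed to force $\cF$ back into the $(\ell+1)$-intersecting regime is not specified and there is no evident way to make it work: a family can simultaneously contain many pairs with large intersection and a few with small intersection without either regime dominating. The missing ingredient that makes any known progress possible is the Frankl--Wilson-type linear-algebra/polynomial method, which handles a single forbidden intersection value uniformly (independently of whether other intersections are small or large), but only under an arithmetic condition on $k-\ell$ — which is precisely why the paper obtains Theorem~\ref{thm:extensionthm} rather than the full conjecture.
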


It is noted in \cite{FT} that this conjecture is true if $k-\ell$ is a prime power. Also, if Conjecture~\ref{conj:missingoneintconj} is true, then it is tight for $k=2\ell+1$ and infinitely many values of $n$; see Frankl \cite{F83}.

Our contribution is to note that for each fixed $\ell$, Conjecture \ref{conj:missingoneintconj} is true for all but at most a finite number of values of $k$. 

\begin{theorem}\label{thm:extensionthm}
Suppose $\cF \subset \binom{[n]}{k}$ satisfies $|F \cap F'| \neq \ell$ for some $2\ell < k$, for every pair of sets $F, F' \in \cF$. If $k-\ell$ does not divide $\ell!$, then 
\[|\cF| \le \binom{n}{k-\ell-1}.\]
In particular, this holds when $\ell < \log k/\log\log k$, where $\log$ is the natural logarithm. 
\end{theorem}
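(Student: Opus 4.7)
The plan is to apply the polynomial method modulo a well-chosen prime. Since $k-\ell\nmid\ell!$, one can fix a prime $p$ and an integer $a\ge 1$ with $p^a\mid(k-\ell)$ and $p^a\nmid\ell!$. A short application of Legendre's formula shows $p^a>\ell$: if $p^a\le\ell$, then $v_p(\ell!)=\sum_{i\ge 1}\lfloor\ell/p^i\rfloor\ge a$, contradicting $p^a\nmid\ell!$. Consequently the last $a$ base-$p$ digits of $k-\ell-1$ are all equal to $p-1$, and every $m\in\{1,\ldots,\ell\}$ has at most $a$ base-$p$ digits.

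For each $F\in\cF$, I would consider the polynomial
\[
P_F(x):=\binom{k-\sum_{i\in F}x_i-1}{k-\ell-1},
\]
viewed modulo $p$ as an integer-valued polynomial of degree $k-\ell-1$ in the linear form $\sum_{i\in F}x_i$. Evaluating at the characteristic vector $v_F$ of $F$ gives $P_F(v_F)=\binom{-1}{k-\ell-1}=(-1)^{k-\ell-1}\not\equiv 0\pmod p$. For distinct $F,F'\in\cF$ with $s=|F\cap F'|\ne\ell$: if $s>\ell$ then $\binom{k-s-1}{k-\ell-1}=0$ trivially; if $s<\ell$ then $\binom{k-s-1}{\ell-s}$ has positive $p$-adic valuation by Kummer's theorem, because adding $k-\ell-1$ and $\ell-s$ in base $p$ produces a carry at the lowest non-zero digit of $\ell-s$ (where the digit of $k-\ell-1$ is $p-1$). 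Thus $P_F(v_{F'})\equiv 0\pmod p$.

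Hence $\{P_F\}_{F\in\cF}$ is a family of degree-$\le(k-\ell-1)$ multilinear polynomials over $\mathbb{F}_p$, linearly independent as functions on $\binom{[n]}{k}$: a relation $\sum_F\lambda_F P_F\equiv 0$ evaluated at $v_{F'}$ collapses to $\lambda_{F'}P_{F'}(v_{F'})\equiv 0$, forcing every $\lambda_{F'}=0$. The standard Ray-Chaudhuri--Wilson dimension refinement for uniform $k$-families---passing from multilinear polynomials of degree $\le k-\ell-1$ to their image on $\binom{[n]}{k}$ after quotienting by the uniformity relation $\sum_i x_i=k$---then caps $|\cF|$ at $\binom{n}{k-\ell-1}$. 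For the ``in particular'' assertion, if $\ell<\log k/\log\log k$ then $\log(\ell!)\le\ell\log\ell<\log k$, so $\ell!<k$ and in fact $k-\ell>\ell!$, which already forces $k-\ell\nmid\ell!$.

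The main obstacle is executing the Ray-Chaudhuri--Wilson refinement to recover exactly $\binom{n}{k-\ell-1}$ rather than the cruder $\sum_{j\le k-\ell-1}\binom{n}{j}$ given by the naive polynomial method: the standard reduction requires $k-j$ to be invertible in $\mathbb{F}_p$ over the relevant range of $j$, and this fails precisely at $j=\ell$ because $p\mid(k-\ell)$. One has to exploit the fact that $P_F$ is a polynomial in the single linear form $\sum_{i\in F}x_i$, together with the expansion $\binom{|F\cap v|}{i}=\sum_{T\in\binom{F}{i}}\mathbf{1}_{T\subseteq v}$, to absorb the lower-degree contributions without inflating the target dimension.
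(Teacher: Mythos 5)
Your congruence analysis reproduces exactly the arithmetic content of the paper's proof: your polynomial $\binom{k-s-1}{k-\ell-1}$ is, up to the sign $(-1)^{k-\ell-1}$, the paper's choice $f(s)=\binom{s-\ell-1}{k-\ell-1}$, and your selection of a prime power $p^a$ with $p^a\mid k-\ell$ and $p^a\nmid \ell!$ (verified via Kummer's theorem rather than by comparing $v_p(k-\ell)$ with $v_p((\ell-i)!)$, which is equivalent) is the same; the ``in particular'' deduction is also fine. The genuine gap is the step you yourself flag as the ``main obstacle.'' The linear-independence argument you give bounds $|\cF|$ only by the dimension of the space of multilinear polynomials of degree at most $k-\ell-1$ on the cube, i.e.\ by $\sum_{j\le k-\ell-1}\binom{n}{j}$, not by $\binom{n}{k-\ell-1}$. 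Passing to the stronger bound for $k$-uniform families is precisely the content of the Keevash--Mubayi--Wilson result that the paper invokes as Theorem~\ref{thm:ell=1thm}, and in this modular setting it is not a routine Ray-Chaudhuri--Wilson refinement: the usual reduction rewrites a monomial $x_T$ with $|T|=j$ on the $k$-slice as $\frac{1}{k-j}\sum_{S\supset T,\,|S|=j+1}x_S$ and iterates up to degree $k-\ell-1$, which requires $k-i\not\equiv 0\pmod p$ for the intermediate values of $i$; this fails (at $i=\ell$, and possibly at other residues) exactly because $p\mid k-\ell$, as you observe.

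Your closing remark that one ``has to exploit'' the special form of $P_F$ to absorb the lower-degree contributions without inflating the dimension is a description of the difficulty, not an argument; as written, your proof establishes only the weaker bound $\sum_{j\le k-\ell-1}\binom{n}{j}$, whereas the statement of Theorem~\ref{thm:extensionthm} is the exact bound $\binom{n}{k-\ell-1}$. To close the gap you should either quote a uniform modular rank theorem of the Frankl--Wilson/KMW type (which is what the paper does, reducing its own contribution to the congruence verification you have already reproduced) or supply the inclusion-matrix rank argument behind such a theorem; the latter is a substantive piece of work and cannot be waved through as ``standard.''
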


\subsection{H-intersecting families}
Ellis~\cite{E} asked which graphs $H$ have the EKR property, and in particular whether $H$ has the EKR property whenever $H$ is $2$-connected. Ellis, Filmus and Friedgut~\cite{EFF} proved that the graph $H=K_3$ has the EKR property, and Berger and Zhao~\cite{BZ} recently proved the same result for  $H=K_4$. On the other hand, Christofides~\cite{C} gave a construction of an $H$-intersecting family of larger size than the EKR-type construction when $H=P_3$, and the EKR-type construction is also not optimal for disjoint unions of stars (unless $H=K_2$)~\cite{AS}. 

We give a construction of $K_{s, t}$-intersecting families which have size larger than the EKR-type construction for $K_{s, t}$, whenever $t$ is sufficiently large in terms of $s$. In particular, for every $s\ge 1$, there are $s$-connected graphs $H$ which do not have the EKR property, answering Ellis's question. 

\begin{theorem}\label{thm:kstintthm}
Let $s$ and $t$ be positive integers with $t > 2^{2s} - 2s - 1$. Then there exists a $K_{s, t}$-intersecting family of graphs $\cF$ on $n$ vertices with $|\cF| > 2^{\binom{n}{2} - st}$.
\end{theorem}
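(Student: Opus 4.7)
The plan is to construct the desired $K_{s,t}$-intersecting family explicitly, replacing the single fixed copy of $K_{s,t}$ in the EKR-type construction by a nontrivial $(2s)$-wise $t$-intersecting family of sets that governs the neighborhoods of $s$ distinguished vertices. Concretely, I would pick vertices $w_1, \dots, w_s \in [n]$, let $U := [n] \setminus \{w_1, \dots, w_s\}$, fix a $(2s)$-wise $t$-intersecting family $\cB \subseteq 2^U$, and take
\[\cF := \{G : N_G(w_i) \cap U \in \cB \text{ for every } i \in [s]\},\]
leaving all edges inside $U$ and inside $\{w_1, \dots, w_s\}$ unconstrained.

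The $K_{s,t}$-intersecting check is clean: for any $G, G' \in \cF$ the common neighborhood of $\{w_1, \dots, w_s\}$ inside $G \cap G'$ is $\bigcap_{i=1}^s \bigl(N_G(w_i) \cap N_{G'}(w_i)\bigr)$, an intersection of $2s$ sets from $\cB$ (with possible repetitions), hence of size at least $t$, so $\{w_1, \dots, w_s\}$ together with any $t$ of these common neighbors form a $K_{s,t}$ sitting in $G \cap G'$. The counting is equally clean: because the $s$ neighborhoods of the $w_i$'s are independent and everything else is free,
\[|\cF| = |\cB|^s \cdot 2^{\binom{s}{2} + \binom{n-s}{2}},\]
so by the identity $\binom{n}{2} = \binom{n-s}{2} + \binom{s}{2} + s(n-s)$ the desired bound $|\cF| > 2^{\binom{n}{2} - st}$ is equivalent to the single condition $|\cB| > 2^{n-s-t}$.

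The whole proof thus reduces to exhibiting a $(2s)$-wise $t$-intersecting family $\cB \subseteq 2^U$ of size strictly larger than $2^{n-s-t}$, the latter being the size of the trivial family consisting of all supersets of a fixed $t$-set. Here I would use the obvious Hilton--Milner-type choice: fix a $(2s+t)$-subset $T \subseteq U$ and set $\cB := \{B \subseteq U : |B \cap T| \ge 2s + t - 1\}$. The $(2s)$-wise $t$-intersecting property is immediate, since any $2s$ such sets collectively miss at most $2s$ elements of $T$ and thus share at least $t$ of them. A direct count (by the choice of $B \cap T$ and then of $B \setminus T$) yields $|\cB| = (2s + t + 1) \cdot 2^{n - 3s - t}$, so $|\cB| > 2^{n - s - t}$ simplifies to $2s + t + 1 > 2^{2s}$, which is exactly the hypothesis $t > 2^{2s} - 2s - 1$.

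The main step is discovering the product-of-neighborhoods construction and recognizing that comparing two graphs in $\cF$ produces an intersection of $2s$ (rather than $2$) sets from $\cB$; once that is in place, the rest is routine, and the bound $t > 2^{2s} - 2s - 1$ falls out exactly. A minor bookkeeping point is that the $2s$ witnessing neighborhoods may coincide (for instance when $G = G'$), but this causes no difficulty since $(2s)$-wise $t$-intersecting trivially implies $d$-wise $t$-intersecting for every $d \le 2s$.
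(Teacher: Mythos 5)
Your proposal is correct and is essentially the paper's own argument: your final family (each of the $s$ distinguished vertices adjacent to at least $t+2s-1$ vertices of a fixed $(t+2s)$-set, all other edges free) coincides exactly with the paper's construction, and your counting and the reduction of the bound to $2s+t+1 > 2^{2s}$ match the paper's computation. The only difference is presentational: you first reduce to finding any $(2s)$-wise $t$-intersecting family $\cB$ with $|\cB| > 2^{n-s-t}$ and then instantiate it, which is a slightly more modular phrasing of the same idea.
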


As all previously known examples of graphs without the EKR property are bipartite, a natural question is: are there examples of graphs $H$ with $\chi(H) > 2$ which do not have the EKR property? We can modify the construction in Theorem~\ref{thm:kstintthm} to give graphs with arbitrary large connectivity and arbitrarily large chromatic number which do not have the EKR property. 

\begin{theorem}\label{ks1srtint}
Let $s_1, s_2, \ldots, s_r, t$ be integers with $s_i \ge 1$ for $1\le i\le r$ and $t > 2^{2\sum_{i} s_i} -2\sum_{i}s_i - 1$. Then there exists a $K_{s_1, \ldots, s_r, t}$-intersecting family of graphs $\cH$ with \[|\cH| > 2^{\binom{n}{2} - \sum_{1\le i < j \le r}s_is_j - \sum_{i=1}^rs_i t}.\]
\end{theorem}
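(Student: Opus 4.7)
The plan is to generalize the construction underlying Theorem \ref{thm:kstintthm}. I would fix a distinguished vertex set $V_0 \subseteq [n]$ partitioned as $V_0 = V_1 \sqcup \cdots \sqcup V_r$ with $|V_i| = s_i$, write $S := \sum_{i=1}^r s_i$, and set $R := [n] \setminus V_0$, so $|R| = n - S$.

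For the core building block I would take $\cA \subseteq 2^R$ to be a nontrivial $(2S)$-wise $t$-intersecting family, concretely $\cA := \{A \subseteq R : |A \cap X| \ge t + 2S - 1\}$ for a fixed $X \subseteq R$ with $|X| = t + 2S$. Any $2S$ members of $\cA$ each miss at most one element of $X$, so their common intersection retains at least $(t + 2S) - 2S = t$ elements of $X$, confirming the $(2S)$-wise $t$-intersecting property, and a direct count gives $|\cA| = (t + 2S + 1) \cdot 2^{n - 3S - t}$.

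Next I would define $\cH$ to be the family of graphs $G$ on $[n]$ such that (i) every edge between different parts of $V_0$ lies in $G$; (ii) edges inside each $V_i$ and inside $R$ are arbitrary; and (iii) for each $v \in V_0$, the set $N_G(v) \cap R$ lies in $\cA$. To see that $\cH$ is $K_{s_1, \ldots, s_r, t}$-intersecting, take $G_1, G_2 \in \cH$: by (i), $V_0$ spans $K_{s_1, \ldots, s_r}$ in $G_1 \cap G_2$, and by (iii) the $2S$ sets $N_{G_j}(v) \cap R$ (with $v \in V_0$, $j \in \{1, 2\}$) all belong to $\cA$, so their common intersection contains a $t$-element set $T \subseteq R$; then $V_0 \cup T$ spans the required $K_{s_1, \ldots, s_r, t}$ in $G_1 \cap G_2$.

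Finally, decomposing the $\binom{n}{2}$ edge slots as $\sum_{i} \binom{s_i}{2} + \sum_{i<j} s_i s_j + S(n-S) + \binom{n-S}{2}$ and noting that the cross-part edges inside $V_0$ are forced gives
\[|\cH| = 2^{\sum_{i} \binom{s_i}{2}} \cdot |\cA|^S \cdot 2^{\binom{n-S}{2}} = (t + 2S + 1)^S \cdot 2^{\binom{n}{2} - \sum_{i<j} s_i s_j - St - 2S^2}.\]
This exceeds the EKR-type bound $2^{\binom{n}{2} - \sum_{i<j} s_i s_j - St}$ exactly when $(t + 2S + 1)^S > 2^{2S^2}$, i.e.\ when $t > 2^{2S} - 2S - 1$, the stated hypothesis. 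There is no substantive obstacle here: forcing the cross-part edges of $V_0$ absorbs precisely the $\sum_{i<j} s_i s_j$ overhead arising from the extra parts of $K_{s_1, \ldots, s_r, t}$, so the entire bookkeeping reduces to the $r = 1$ case already present in Theorem \ref{thm:kstintthm}.
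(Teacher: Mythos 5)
Your construction is correct and coincides with the paper's: the paper likewise forces all edges between distinct parts $S_i$, $S_j$, requires each vertex of $\bigcup S_i$ to have at least $t+2\sum_i s_i-1$ neighbors in a fixed disjoint set of size $t+2\sum_i s_i$, and leaves all other edges free, which is exactly your condition phrased through the family $\cA$. Your counting and the verification of the intersecting property match the paper's (omitted) details, so this is the same approach.
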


\subsection{Organization of the paper} 
Our proofs of Theorems \ref{thm:dwiseintthm} and \ref{thm:dwisetintthm} are presented in Section 2. The proofs of the EKR-type results Theorems \ref{thm:ekrdwisetintthm} and \ref{thm:pekrdwisetintthm} are presented in Section 3. The counterexample to Conjecture~\ref{conj:ov} is described in Section 4. The proof of Theorem \ref{thm:extensionthm} is given in Section 5. The proof of Theorem~\ref{thm:kstintthm} appears in Section 6. 

\section{Proofs of Theorems \ref{thm:dwiseintthm} and \ref{thm:dwisetintthm}}

We begin with a simple observation about $m$-wise intersections in nontrivial $d$-wise, $t$-intersecting families. This observation was noted in \cite{OV} in the case $t=1$, and was proven at least as early as \cite{FT3}.

\begin{lemma}\label{lem:msetslem}
Suppose $\cF$ is a nontrivial $d$-wise $t$-intersecting family. Let $A_1, \ldots, A_m \in \cF$, where $m\le d$. Then, 
\[|\cap_{i=1}^m A_i| \ge t + d - m.\]
\end{lemma}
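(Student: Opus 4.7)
My plan is to argue by contradiction: suppose $|A_1 \cap \cdots \cap A_m| \le t+d-m-1$, and extend this collection by $d-m$ additional sets of $\cF$ to obtain $d$ sets whose intersection has size less than $t$, contradicting the $d$-wise $t$-intersecting property.

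First I would observe that a $d$-wise $t$-intersecting family is automatically $m$-wise $t$-intersecting for every $m \le d$ (since the intersection of fewer sets is at least as large), so the set $T := A_1 \cap \cdots \cap A_m$ already satisfies $|T| \ge t$. This gives me a lower bound on the intersection size to start from, and the goal is to strengthen it to $t+d-m$.

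Next, the key step is an iterative ``shrinking'' procedure using the nontriviality assumption. Set $I_0 := T$, and while the current intersection $I_j$ has size at least $t$, I claim there exists some $B_{j+1} \in \cF$ with $|I_j \cap B_{j+1}| \le |I_j| - 1$: indeed, since $|I_j| \ge t > |\bigcap_{F\in\cF} F|$, the set $I_j$ cannot be contained in $\bigcap_{F\in\cF} F$, so some element of $I_j$ is missing from some member of $\cF$. Thus after $j$ steps the intersection has size at most $|T| - j$, and so we reach size less than $t$ after at most $|T| - t + 1$ steps.

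Under the contrary assumption $|T| \le t+d-m-1$, this iterative procedure terminates in $j \le |T| - t + 1 \le d-m$ steps, producing sets $B_1, \ldots, B_j \in \cF$ with $|A_1 \cap \cdots \cap A_m \cap B_1 \cap \cdots \cap B_j| < t$. Since $m + j \le d$, padding out to $d$ sets (the definition of $d$-wise $t$-intersecting allows repetition, and additional sets can only shrink the intersection further) yields $d$ members of $\cF$ with intersection of size less than $t$, the desired contradiction. The only subtle point is the nontriviality-driven shrinking step, but it is essentially immediate from the size comparison $|I_j| > |\bigcap_{F\in\cF} F|$, so I do not anticipate any serious obstacle.
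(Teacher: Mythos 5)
Your proof is correct and follows essentially the same route as the paper: both use nontriviality to produce, for each element of $A_1\cap\cdots\cap A_m$ outside $\bigcap_{F\in\cF}F$, a member of $\cF$ omitting it, and observe that at most $d-m$ such sets suffice to drive the intersection below $t$, contradicting the $d$-wise $t$-intersecting property. Your greedy one-set-at-a-time formulation merely repackages the paper's argument (which fixes one set $B_i$ per extra element $y_i$ and then splits into the cases $\ell\le d-m$ and $\ell>d-m$), avoiding that case distinction but introducing no new idea.
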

\begin{proof}
Suppose that $\bigcap_{A\in\cF}A = \{x_1, \ldots, x_c\}$, where $c\le t-1$, since $\cF$ is a nontrivial intersecting family. Suppose that $\bigcap_{i=1}^mA_i = \{x_1, \ldots, x_c\} \cup \{y_1, \ldots, y_{\ell}\}$, and assume for a contradiction that $c + \ell \le t+d-m-1$. For each element $y_i$, there is a set $B_i \in \cF$ such that $y_i \notin B_i$.  If $\ell \le d - m$, then $|(\cap_{i=1}^{\ell}B_i) \bigcap (\cap_{i=1}^m A_i )|  = c \le t-1$, which contradicts the $d$-wise $t$-intersecting property. On the other hand, if $\ell > d - m$, then $|(\cap_{i=1}^{d-m}B_i) \bigcap (\cap_{i=1}^m A_i )|  \le c + \ell - (d-m)  \le t-1$, which again contradicts the $d$-wise $t$-intersecting property. 
\end{proof}

By setting $m=2$ in Lemma \ref{lem:msetslem}, we immediately obtain the following corollary. 
\begin{corollary}\label{cor:dwisecor}
If $\cF$ is a nontrivial $d$-wise $t$-intersecting family, then $\cF$ is also a $(t+d-2)$-intersecting family. 
\end{corollary}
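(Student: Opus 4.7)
The statement is an immediate consequence of Lemma \ref{lem:msetslem}, so the proof plan is essentially a single invocation. My plan is to apply Lemma \ref{lem:msetslem} with $m=2$.

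First I would observe that the hypothesis $m \le d$ in the lemma is satisfied, since any $d$-wise intersecting family must have $d \ge 2$ for the condition to be nontrivial. Next, I would take any two sets $A_1, A_2 \in \cF$ and apply Lemma \ref{lem:msetslem} to conclude
\[
|A_1 \cap A_2| \ge t + d - 2.
\]
Since this holds for every pair of members of $\cF$, the family $\cF$ is $(t+d-2)$-intersecting by definition, which is exactly the desired conclusion.

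There is no real obstacle here, as the statement is essentially a specialization of the preceding lemma; the only thing to check is that $m=2$ lies in the allowed range $m \le d$, which is automatic. Consequently the proof can be written in a single sentence, simply citing Lemma \ref{lem:msetslem} with the choice $m=2$.
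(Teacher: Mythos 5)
Your proposal is correct and coincides with the paper's own derivation: the corollary is obtained exactly by setting $m=2$ in Lemma \ref{lem:msetslem}, and your check that $m=2\le d$ is the only hypothesis to verify is accurate.
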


We will only prove Theorem \ref{thm:dwisetintthm}, as Theorem \ref{thm:dwiseintthm} follows from Theorem \ref{thm:dwisetintthm} by setting $t=1$. Theorem \ref{thm:dwisetintthm} is a  straightforward consequence of Theorems~\ref{thm:akntcithm} and \ref{thm:aktcithm} and Corollary \ref{cor:dwisecor}.

\begin{proof}[Proof of Theorem \ref{thm:dwisetintthm}]
If $\cF \subset \binom{[n]}{k}$ is a nontrivial $d$-wise $t$-intersecting family, then $|\cap_{F\in \cF}F| \le t - 1 < t + d - 2$, so by Corollary \ref{cor:dwisecor}, $\cF$ is also a nontrivial $(t+d-2)$-intersecting family. Theorem \ref{thm:akntcithm} now immediately implies Theorem \ref{thm:dwisetintthm} when $n\ge (t+d-1)(k- t - d + 3)$.  If $\left(\frac{t+d+1}{2}\right)(k-t-d+3) < n < (t+d-1)(k-t-d+3)$, then Theorem \ref{thm:aktcithm} immediately implies the result.
\end{proof}

\begin{remark}\label{rem:AKCITimprov}
For $n > 2k - t - d +2$, the complete intersection theorem of Ahlswede and Khachatrian~\cite{AK1, AK3} also provides an upper bound on the maximum size of a nontrivial $d$-wise $t$-intersecting family. However, the optimal $(t+d-2)$-intersecting families for $n < \left(\frac{t+d+1}{2}\right)(k-t-d+3)$ in the complete intersection theorem are not nontrivial $d$-wise $t$-intersecting.
\end{remark}

O'Neill and Verstra\"{e}te~\cite[Theorem~2]{OV} additionally proved a stability result for nontrivial $d$-wise intersecting families and sufficiently large values of $n$. Corollary~\ref{cor:dwisecor} does not seem to immediately imply their stability result. It would be interesting to obtain a similar stability result for nontrivial $d$-wise $t$-intersecting families as a further extension of Theorem~\ref{thm:dwisetintthm}. In the case $d=2$ and $t=1$, Han and Kohayakawa~\cite{HK} (for all values of $n$) and Kostochka and Mubayi~\cite{KM} (for sufficiently large values of $n$) independently obtained stability results for the Hilton-Milner theorem by determining the maximum size of a nontrivial intersecting family that is not a subfamily of $\cH(n, k, 2)$. 

Note that Lemma \ref{lem:msetslem} and Corollary \ref{cor:dwisecor} make no assumption on the sizes of the sets in $\cF$, so one might wonder if these could be useful tools for the nonuniform case as well. Katona \cite{K} determined the maximum size of a $t$-intersecting family $\cF \subset 2^{[n]}$. In general, the extremal families $\cF$ are nontrivial and not $d$-wise, $(t-d+1)$-intersecting, so one cannot hope to obtain a proof like that of Theorem \ref{thm:dwisetintthm}. On the other hand, Frankl \cite{F} obtained a number of results for nonuniform nontrivial $d$-wise, $t$-intersecting families, and one of the tools used is a version of Corollary \ref{cor:dwisecor} (see also Section 4 of the survey of Frankl and Tokushige \cite{FT1}). 

Let us also mention that the nonuniform version of Theorem~\ref{thm:dwisetintthm} in the case $t=1$ was already proven by Brace and Daykin~\cite{BD}. Brace and Daykin proved that if $\cF \subset 2^{[n]}$ is a nontrivial $d$-wise intersecting family, then $|\cF| \le |\cA(n, d)|$, where $\cA(n, d):= \{A\subseteq [n] : |A\cap [d+1]|\ge d\}$ is the nonuniform analogue of $\cA(n, k, d)$. 

\section{The uniform and measure Erd\H{o}s-Ko-Rado theorems}

We now give a simple proof of Theorem \ref{thm:ekrdwisetintthm} as a corollary of Corollary~\ref{cor:dwisecor} and the Erd\H{o}s-Ko-Rado theorem for $t$-intersecting families. 

\begin{theorem}[Erd\H{o}s-Ko-Rado for $t$-intersecting families]\label{thm:ekrtintthm}
Let $\cF \subset \binom{[n]}{k}$ be a $t$-intersecting family. Then, for $n \ge (t+1)(k-t+1)$,
\[|\cF| \le \binom{n-t}{k-t},\]
and for $n > (t+1)(k-t+1)$, equality holds if and only if $\cF$ is isomorphic to $\{F \in \binom{[n]}{k}:\ [t] \subseteq F\}$.
\end{theorem}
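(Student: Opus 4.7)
The plan is to deduce Theorem~\ref{thm:ekrtintthm} from the nontrivial Ahlswede--Khachatrian theorem (Theorem~\ref{thm:akntcithm}) already stated, via a short two-case split according to whether $\cF$ is trivially or nontrivially $t$-intersecting. In the trivial case, when $\bigl|\bigcap_{F \in \cF} F\bigr| \ge t$, I would fix a $t$-element subset $T$ contained in every $F \in \cF$, which immediately gives $|\cF| \le \binom{n-t}{k-t}$, with equality forcing $\cF$ to be the canonical EKR family $\{F \in \binom{[n]}{k} : T \subseteq F\}$. In the nontrivial case, Theorem~\ref{thm:akntcithm} applies for $n > (t+1)(k-t+1)$ and yields $|\cF| \le \max\bigl\{|\cA(n,k,t+1)|,|\cH(n,k,t+1)|\bigr\}$.

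The key remaining step is then a direct numerical check that both of these quantities are strictly smaller than $\binom{n-t}{k-t}$ throughout the range $n > (t+1)(k-t+1)$. For $\cA(n,k,t+1)$, writing $|\cA(n,k,t+1)| = (t+2)\binom{n-t-2}{k-t-1} + \binom{n-t-2}{k-t-2}$ and expanding $\binom{n-t}{k-t}$ twice via Pascal's identity reduces the inequality $|\cA(n,k,t+1)| < \binom{n-t}{k-t}$ to the elementary bound $t(k-t) < n - k - 1$, which is exactly equivalent to $n > (t+1)(k-t+1)$; so the hypothesis matches the threshold precisely. A parallel calculation, using the exact formula $|\cH(n,k,t+1)| = \binom{n-t}{k-t} - \binom{n-k-1}{k-t} + t$, reduces the corresponding $\cH$ bound to $\binom{n-k-1}{k-t} > t$, which is again immediate in the given range. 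Combining the two cases then gives the stated upper bound together with uniqueness in the strict-inequality regime.

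The main obstacle is the boundary case $n = (t+1)(k-t+1)$, where the computation above gives $|\cA(n,k,t+1)| = \binom{n-t}{k-t}$, so uniqueness genuinely fails at this endpoint (consistent with the statement asserting uniqueness only for strict inequality), and Theorem~\ref{thm:akntcithm} as stated here does not cover this endpoint. The cleanest way to patch the argument is to appeal instead to the full Ahlswede--Khachatrian complete intersection theorem (or to a Frankl--Wilson style polynomial-method proof), which covers $n = (t+1)(k-t+1)$ as well and yields $|\cA(n,k,t+1)|$ as the upper bound throughout the regime where the EKR family meets it.
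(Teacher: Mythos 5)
Your proposal is correct, but it takes a genuinely different route from the paper: the paper does not prove Theorem~\ref{thm:ekrtintthm} at all, it quotes it as a known result, attributed to Frankl (for $t\ge 15$) and to Wilson (for all $t$). Your two-case derivation is sound. In the trivial case the bound and uniqueness are immediate, and in the nontrivial case Theorem~\ref{thm:akntcithm} applies for $n>(t+1)(k-t+1)$; your numerical comparisons check out, since $\binom{n-t}{k-t}-|\cA(n,k,t+1)|=\binom{n-t-2}{k-t}-t\binom{n-t-2}{k-t-1}$, which is positive precisely when $n-k-1>t(k-t)$, i.e.\ $n>(t+1)(k-t+1)$, and the bound $|\cH(n,k,t+1)|=\binom{n-t}{k-t}-\binom{n-k-1}{k-t}+t<\binom{n-t}{k-t}$ reduces to $\binom{n-k-1}{k-t}>t$, which indeed holds throughout that range. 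You also correctly isolate the one real gap: the endpoint $n=(t+1)(k-t+1)$, where the statement still asserts the upper bound (though not uniqueness, and uniqueness genuinely fails there since $|\cA(n,k,t+1)|=\binom{n-t}{k-t}$), lies outside the hypotheses of both Theorem~\ref{thm:akntcithm} and Theorem~\ref{thm:aktcithm} as quoted, so it must be patched by invoking the full Ahlswede--Khachatrian complete intersection theorem or Wilson's theorem directly. Comparing the two approaches: the paper's citation keeps the note short and rests on the classical proofs of Frankl and Wilson, while your argument shows the statement follows formally from the nontrivial-intersection theorem already quoted in the paper, which is aesthetically consistent with how the paper proves Theorem~\ref{thm:dwisetintthm}; on the other hand it is a reduction to strictly heavier machinery rather than a more elementary proof, and at the boundary value of $n$ it collapses back to citing the complete intersection theorem anyway.
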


Theorem~\ref{thm:ekrtintthm} was proven by Frankl~\cite{F3} for $t\ge 15$ and by Wilson~\cite{Wil} for all $t$. 

\begin{proof}[Proof of Theorem~\ref{thm:ekrdwisetintthm}]
Let $\cF \subset \binom{[n]}{k}$ be a nontrivial $d$-wise, $t$-intersecting family. By Corollary~\ref{cor:dwisecor}, $\cF$ is also $(t+d-2)$-intersecting. Hence, by Theorem~\ref{thm:ekrtintthm}, for $n > (t+d-2+1)(k-(t+d-2)+1) = (t+d-1)(k-t-d+3)$, we have
\[|\cF| \le \binom{n-t-d+2}{k-t-d+2} \le \binom{n-t}{k-t}.\]
If $d\ge 3$, then the second inequality is strict and so equality holds if and only if $\cF$ is trivial. 
\end{proof}

Note that Theorem~\ref{thm:ekrdwisetintthm} is interesting if $t+d-1 \le k < 2t+d-2$, as then $(t-d+1)(k-t-d+3) \le (t+1)(k-t+1)$. For $k \ge 2t+d-2$, Theorem~\ref{thm:ekrdwisetintthm} can be deduced from Theorem~\ref{thm:ekrtintthm} and the fact that $d$-wise $t$-intersecting families are also ($2$-wise) $t$-intersecting families. 

In order to improve the range for which Theorem~\ref{thm:ekrdwisetintthm} holds, we need to use a good upper bound for the size of a $t$-intersecting family for smaller values of $n$.  For simplicity, we use a recent upper bound of Frankl~\cite{F1}. 

\begin{theorem}[Frankl]\label{thm:n-1k-tthm}
Let $\cF \subset \binom{[n]}{k}$ be a $t$-intersecting family. Then, if $n \ge 2k-t+1$, 
\[|\cF|\le \binom{n-1}{k-t}.\]
\end{theorem}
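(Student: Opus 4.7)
The plan is to use Frankl's shifting technique combined with induction on $2k-t$. By repeatedly applying the standard shifting operations $S_{ij}$ for $i<j$, which preserve both $|\cF|$ and the $t$-intersecting property, we may assume without loss of generality that $\cF$ is left-compressed.

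For the base case $2k-t=1$, we have $k=t=1$, so $\cF$ consists of a single repeated singleton and $|\cF|\le 1 = \binom{n-1}{0}$. In the inductive step when $t=1$ and $k\ge 2$, the bound reduces to the classical Erd\H{o}s-Ko-Rado theorem, which applies since $n\ge 2k$. For the main inductive step $t\ge 2$, split $\cF = \cF_1 \sqcup \cF_0$ according to whether $1\in F$, and let $\cG = \{F\setminus\{1\}: F\in \cF_1\}\subseteq \binom{[2,n]}{k-1}$. Then $\cG$ is readily seen to be $(t-1)$-intersecting.

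The crucial structural observation is that $\cF_0 \subseteq \binom{[2,n]}{k}$ is actually $(t+1)$-intersecting, not merely $t$-intersecting. This stronger property follows from left-compression: given $F,F'\in \cF_0$, we have $|F\cap F'|\ge t\ge 2$, so we may pick $a\in F\cap F'$; left-compression yields $F'' := (F\setminus\{a\})\cup\{1\}\in \cF$, and since $1\notin F'$, $|F''\cap F'| = |F\cap F'|-1 \ge t$, forcing $|F\cap F'|\ge t+1$. Now $\cG$ (with parameters $(k-1,t-1)$) and $\cF_0$ (with parameters $(k,t+1)$) both satisfy $2k'-t' = 2k-t-1 < 2k-t$ in a ground set of size $n-1\ge 2k-t$, so the inductive hypothesis applies to both and yields $|\cG|\le \binom{n-2}{k-t}$ and $|\cF_0|\le \binom{n-2}{k-t-1}$. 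By Pascal's identity, $|\cF| = |\cG| + |\cF_0| \le \binom{n-2}{k-t} + \binom{n-2}{k-t-1} = \binom{n-1}{k-t}$.

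The main obstacle is the $(t+1)$-intersecting claim for $\cF_0$, since without it the two inductive bounds would sum to roughly $2\binom{n-2}{k-t}$, which is too weak by almost a factor of $2$. This improvement depends essentially on the left-compressed property (ensuring the swap $a\mapsto 1$ keeps us inside $\cF$) together with the fact that $t\ge 1$ guarantees the existence of an element of $F\cap F'$ to swap down. Once this structural fact is established, the remainder is a routine check of parameters and boundary conditions.
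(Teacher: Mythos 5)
The paper does not prove this statement at all: it is imported verbatim from Frankl's paper \cite{F1} (``An improved universal bound for $t$-intersecting families'') and used as a black box in the proof of Theorem~\ref{thm:improvedekrdwisetintthm}, so there is no internal proof to compare against. Your argument is correct and self-contained, and it is a genuinely useful addition in that sense. The key points all check out: left-compression preserves size and the $t$-intersecting property; for a shifted family, two members avoiding $1$ must be $(t+1)$-intersecting, since otherwise replacing some $a\in F\cap F'$ by $1$ produces a member meeting $F'$ in only $t-1$ elements; the two sub-instances $(n-1,k-1,t-1)$ and $(n-1,k,t+1)$ both have parameter $2k'-t'=2k-t-1$ and satisfy $n-1\ge 2k'-t'+1$ exactly when $n\ge 2k-t+1$, so the induction on $2k-t$ closes via Pascal's identity, with $t=1$ handled by Erd\H{o}s--Ko--Rado at $n\ge 2k$ and $2k-t=1$ trivially. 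Two boundary remarks you should make explicit: (i) the degenerate case $k=t$ (where your $(t+1)$-intersecting claim applied to a single set $F=F'$ would need $k\ge t+1$) should be disposed of directly, e.g.\ by noting that a $t$-intersecting family of $t$-sets has at most one member, or that a shifted family whose unique member avoids $1$ cannot exist, so $\cF_0=\emptyset$ there; and (ii) you are implicitly using the convention (as in this paper's definition, which allows repeated sets) that members of a $t'$-intersecting family have size at least $t'$, so that the inductive statement is not applied to vacuously intersecting families with $k'<t'$. Both are one-line fixes, not real gaps.
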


We also need the following lemma for fixed values of $k$/$n$. 

\begin{lemma}\label{pbetalem}
Let $t$ and $d$ be integers with $t\ge 2$ and $d\ge 3$. Let $p$ be a rational number with $0 < p < \beta_{t, d}$, where $\beta_{t, d}$ is defined as in \eqref{eqn:betaeqn}. Then, there exists $n_0$ such that if $n > n_0$ and $\frac{k}{n} = p$, we have
\[\binom{n-t}{k-t} > \binom{n-1}{k-t-d+2}.\]
\end{lemma}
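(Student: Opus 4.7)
The plan is to compute the ratio $R_n := \binom{n-t}{k-t}\big/\binom{n-1}{k-t-d+2}$ in closed form and show that its limit, as $n\to\infty$ with $k/n = p$, is strictly greater than $1$; the lemma then follows because this forces $R_n > 1$ for all sufficiently large $n$ along the (infinite) subsequence of $n$ for which $pn$ is an integer, guaranteed by the rationality of $p$.

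First, I would expand each binomial as a quotient of factorials and cancel common terms to obtain
$$R_n = \frac{\prod_{i=1}^{t+d-3}(n-k+i)}{\prod_{i=1}^{t-1}(n-i)\ \cdot\ \prod_{j=1}^{d-2}(k-t-d+2+j)}.$$
The key bookkeeping identity is $t+d-3 = (t-1)+(d-2)$: the numerator and denominator have exactly the same number of linear factors, which is what makes a nontrivial finite limit possible. For all sufficiently large $n$ in our subsequence, every factor above is a positive integer, so no degenerate behavior occurs.

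Next, I would substitute $k = pn$ and divide top and bottom by $n^{t+d-3}$. Each of the $t+d-3$ numerator factors becomes $(1-p) + O(1/n)$; each of the $t-1$ ``small'' denominator factors becomes $1 + O(1/n)$; and each of the $d-2$ ``large'' denominator factors becomes $p + O(1/n)$. Consequently
$$\lim_{n\to\infty} R_n \ =\ \frac{(1-p)^{t+d-3}}{p^{d-2}}.$$

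Finally, I would appeal to the defining equation \eqref{eqn:betaeqn} for $\beta_{t,d}$ together with the strict monotonicity of $f_{t,d}(x)=(1-x)^{t+d-3}-x^{d-2}$ on $(0,1)$: since $t\ge 2$ and $d\ge 3$ give exponents $t+d-3 \ge 2$ and $d-2 \ge 1$, the summand $(1-x)^{t+d-3}$ is strictly decreasing and $-x^{d-2}$ is strictly decreasing, so $f_{t,d}$ is strictly decreasing. Combined with $f_{t,d}(\beta_{t,d})=0$ and $p<\beta_{t,d}$, this yields $f_{t,d}(p)>0$, i.e.\ $(1-p)^{t+d-3}/p^{d-2}>1$. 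Hence $\lim_n R_n > 1$, so $R_n>1$ for every $n\ge n_0$ for some threshold $n_0$, proving the lemma. There is no serious obstacle; essentially all the content is the correct matching of factor counts on top and bottom, the remainder being the elementary monotonicity of $f_{t,d}$.
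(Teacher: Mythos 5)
Your proposal is correct and matches the paper's argument in essence: both cancel the common factors to reduce the inequality to comparing $t+d-3$ linear factors on each side with $k=pn$, and both conclude from $(1-p)^{t+d-3}>p^{d-2}$ for $0<p<\beta_{t,d}$; your ratio-and-limit phrasing is just a cosmetic variant of the paper's comparison of leading coefficients of the two degree-$(t+d-3)$ polynomials. Your explicit monotonicity argument for $f_{t,d}$ merely fills in a step the paper leaves implicit.
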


\begin{proof}[Proof of Lemma~\ref{pbetalem}]
Expanding both binomial coefficients, we need to show that for $n > n_0$, 
\[\frac{(n-t)(n-t+1)\cdot\ldots\cdot(n-k+1)}{(k-t)!} > \frac{(n-1)(n-2)\cdot\ldots\cdot(n-k+t+d-2)}{(k-t-d+2)!}.\]
Rearranging, this is equivalent to 
\begin{equation}\label{eqn:improvedbound}
(n-k+t+d-3)\cdot\ldots\cdot (n-k+1) > (n-1)\cdot\ldots\cdot(n-t+1)(k-t)\cdot\ldots\cdot(k-t-d+3).
\end{equation}
We now set $k=pn$. Both sides of \eqref{eqn:improvedbound} are polynomials in $n$ of degree $t+d-3$. The coefficient of $n^{t+d-3}$ on the left-hand side is $(1-p)^{t+d-3}$, while the coefficient of $n^{t+d-3}$ on the right-hand side is $p^{d-2}$. From the definition of $\beta_{t, d}$ in \eqref{eqn:betaeqn}, it follows that $(1-p)^{t+d-3} > p^{d-2}$ whenever $0 < p < \beta_{t, d}$. Hence, the conclusion of Lemma~\ref{pbetalem} holds if $n$ is sufficiently large. 
\end{proof}

Theorem~\ref{thm:improvedekrdwisetintthm} is now an immediate consequence of Corollary~\ref{cor:dwisecor}, Theorem~\ref{thm:n-1k-tthm} and Lemma~\ref{pbetalem}. 

It is possible that further improvements on the range of the constant $c$ in Theorem~\ref{thm:improvedekrdwisetintthm} could be obtained by using the full statement of the complete intersection theorem of Ahlswede and Khachatrian~\cite{AK1, AK3} in place of Theorem~\ref{thm:n-1k-tthm}. However, the proof for such improvements may be quite technical. 

To obtain Theorem~\ref{thm:pekrdwisetintthm}, we use the following theorem of Tokushige~\cite[Theorem~1]{Tok1} relating results about the maximum size of $k$-uniform $d$-wise $t$-intersecting families and the maximum product measure of nonuniform $d$-wise $t$-intersecting families. 

\begin{theorem}[Tokushige]\label{thm:tokthm}
Let $d\ge 2$ and $t\ge 1$ be integers, and let $p \in (0, 1)$. Then, statement (i) implies statement (ii).

(i) Let $\cF \subset \binom{[n]}{k}$ be a $d$-wise $t$-intersecting family. Then there exist $\epsilon$ and $n_0$ such that the inequality
\[|\cF| \le \binom{n-t}{k-t}\]
holds whenever $n > n_0$ and $|\frac{k}{n} - p| \le \epsilon$.

(ii) Let $\cF \subset 2^{[n]}$ be a $d$-wise $t$-intersecting family. Then,
\[\mu_p(\cF) \le p^t\]
holds for all $n\ge t$. 
\end{theorem}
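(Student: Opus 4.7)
The plan is to reduce the measure statement (ii) to the uniform statement (i) via a standard blow-up construction. Given a $d$-wise $t$-intersecting family $\cF \subset 2^{[n]}$ and a target $p \in (0, 1)$, the strategy is to construct, for each sufficiently large integer $N$, a $k$-uniform $d$-wise $t$-intersecting family $\hat{\cF}_N \subset \binom{[N]}{k}$ on an enlarged ground set, apply (i) to $\hat{\cF}_N$, and then take $N \to \infty$ with $k/N \to p$.

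For each $N \ge n$ and each integer $k$ with $n \le k \le N$, define
\[
\hat{\cF}_N := \left\{S \in \binom{[N]}{k} :\ S \cap [n] \in \cF\right\}.
\]
Two simple observations drive the argument. First, $\hat{\cF}_N$ is $d$-wise $t$-intersecting: for any $S_1, \ldots, S_d \in \hat{\cF}_N$, the sets $F_i := S_i \cap [n]$ all lie in $\cF$, so $|\bigcap_{i=1}^d F_i| \ge t$ by hypothesis, and since this intersection is contained in $\bigcap_{i=1}^d S_i$, we obtain $|\bigcap_{i=1}^d S_i| \ge t$. Second, counting the elements of $\hat{\cF}_N$ by their trace on $[n]$ gives
\[
|\hat{\cF}_N| = \sum_{F \in \cF} \binom{N-n}{k - |F|}.
\]

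Let $\epsilon$ and $n_0$ be the constants guaranteed by (i) for the parameters $d$, $t$, $p$, and set $k = k(N) := \lfloor pN \rfloor$. For $N$ sufficiently large we have $N > n_0$, $k \ge n$, and $|k/N - p| \le \epsilon$, so (i) yields $|\hat{\cF}_N| \le \binom{N-t}{k-t}$. Dividing both sides by $\binom{N}{k}$ and using the elementary asymptotic identities
\[
\lim_{N \to \infty} \frac{\binom{N-n}{k-j}}{\binom{N}{k}} = p^j(1-p)^{n-j} \qquad \text{and} \qquad \lim_{N \to \infty} \frac{\binom{N-t}{k-t}}{\binom{N}{k}} = p^t,
\]
both valid whenever $k/N \to p$ and easily verified by expanding each ratio as a product of $n$, respectively $t$, linear factors in $N$ and $k$, together with the fact that the sum over $F \in \cF$ has finitely many terms (so limit and sum commute), we conclude
\[
\mu_p(\cF) = \sum_{F \in \cF} p^{|F|}(1-p)^{n-|F|} = \lim_{N \to \infty} \frac{|\hat{\cF}_N|}{\binom{N}{k}} \le \lim_{N \to \infty} \frac{\binom{N-t}{k-t}}{\binom{N}{k}} = p^t.
\]

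The construction and verification are entirely standard; there is no substantive obstacle. The only care required is in confirming that the hypotheses of (i) apply eventually along the chosen sequence, which relies on the fact that $\epsilon$ and $n_0$ depend only on $d$, $t$, $p$ and not on the family $\cF$ or on the ground-set size $n$ of the original nonuniform problem, and in checking the two asymptotic binomial ratios.
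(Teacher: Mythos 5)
Your proof is correct: the blow-up family $\hat{\cF}_N$ is indeed $d$-wise $t$-intersecting, the trace count and the two binomial-ratio limits are right, and you correctly note the one point that needs care, namely that $\epsilon$ and $n_0$ from (i) do not depend on $\cF$ or $n$, so (i) applies eventually along $k=\lfloor pN\rfloor$. The paper itself gives no proof of this statement --- it is quoted from Tokushige \cite{Tok1} --- and your argument is essentially the standard uniform-to-weighted transfer used there, so there is nothing to correct.
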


Theorem~\ref{thm:pekrdwisetintthm} can be immediately deduced from Theorems~\ref{thm:improvedekrdwisetintthm} and \ref{thm:tokthm}. 

It would be interesting to prove Theorem~\ref{thm:pekrdwisetintthm} without the use of Theorem~\ref{thm:improvedekrdwisetintthm}. In the case of $t$-intersecting families, Friedgut~\cite{Fried} obtained the product measure version of Theorem~\ref{thm:ekrtintthm} by using a version of the Hoffman bound. Friedgut also obtained uniqueness and stability statements and it would be interesting to have analogous statements for Theorem~\ref{thm:pekrdwisetintthm}. 

\section{Counterexamples to Conjecture \ref{conj:ov}}

We briefly return to nontrivial $d$-wise intersecting families, which initially motivated our research. It is not difficult to show that $|\cA(n, k, d)| \ge |\cH(n, k, d)|$ when $n\le d(k-d+2)$, so Conjecture~\ref{conj:ov} reduces to asking if $\cA(n, k, d)$ is the maximum size nontrivial $d$-wise intersecting family for this range of values of $n$. We present some counterexamples to this conjecture. 

Let $n=11$ and $k=7$, and consider the nontrivial $3$-wise intersecting family 
\[\cM(11, 7, 3, 3):= \{F \in \binom{[11]}{7}: 1\in F, \hspace{2mm} |F\cap [2, 8]|\ge 4\} \cup \{[2, 8]\}.\]

It is easy to verify that $|\cM(11, 7, 3, 3)| = 176$, while $\max\{|\cA(11, 7, 3)|, |\cH(11, 7, 3)|\} = 175$. This family was originally discovered as the solution to an integer program solved by the Gurobi optimization software \cite{G}; see Wagner \cite{W} for an illuminating discussion of this method. 

In general, for $1\le r \le \left\lfloor{\frac{k-1}{d-1}}\right\rfloor$, define
\[\cM(n, k, d, r):= \left\{F \in \binom{[n]}{k}: 1\in F, |F \cap [2, 2+(d-1)r]| \ge 1+(d-2)r\right\} \] \[\bigcup \left\{F \in\binom{[2, n]}{k}: F \supseteq [2, 2+(d-1)r]\right\}. \]
It is straightforward to verify that these families are nontrivial $d$-wise intersecting. Now, define 
\[m(n, k, d):= \max_{1\le r\le \lfloor{\frac{k-1}{d-1}\rfloor}} |\cM(n, k, d, r)|. \]

Note $\cA(n, k, d) \cong \cM(n, k, d, 1)$. We conjecture that $m(n, k, d)$ is the maximum size of a nontrivial $d$-wise intersecting family for $n$ in this range of values.

\begin{conjecture}\label{conj:dwiseintconj}
If $\cF \subseteq \binom{[n]}{k}$ is a nontrivial $d$-wise intersecting family, then for $k\frac{d}{d-1}\le n \le \left(1+\frac{d}{2}\right)(k-d+2)$, we have 
\[|\cF| \le m(n, k, d).\]
Furthermore, all extremal families are isomorphic to some $\cM(n, k, d, r)$. 
\end{conjecture}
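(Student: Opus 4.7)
The plan is to attack Conjecture \ref{conj:dwiseintconj} via shifting combined with a structural analysis in the spirit of Frankl's work on $d$-wise intersecting families and the O'Neill-Verstra\"ete proof of Theorem \ref{thm:ov}. The starting point is that the standard left-shift $S_{ij}$ preserves the $d$-wise intersecting property, so I would apply shifts until the family stabilizes and then split into two cases according to whether the shifted $\cF$ is still nontrivial.

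If shifting destroys nontriviality, then some element $x$ lies in every set of the shifted family. Since the original $\cF$ was nontrivial, one can track a witness set back through the shift sequence and apply a Hilton-Milner-type splitting $\cF = \cF_0 \cup \cF_1$, bounding $|\cF_1|$ by the number of sets containing the witness plus a lower-order correction, and comparing directly to $|\cM(n, k, d, 1)| = |\cA(n, k, d)|$. If shifting preserves nontriviality, I would apply Lemma \ref{lem:msetslem} (together with its $m$-wise analogues for $3 \le m \le d$) to deduce strong lower bounds on $|F_1 \cap \dots \cap F_m|$ for $m$ sets of $\cF$. Combined with shiftedness, this should force an initial segment $[1, 1+(d-1)r^*]$ such that every $F \in \cF$ either contains $1$ and satisfies $|F \cap [2, 2+(d-1)r^*]| \ge 1+(d-2)r^*$, or avoids $1$ but contains the whole core $[2, 2+(d-1)r^*]$. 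One would then compare $\cF$ to $\cM(n, k, d, r^*)$ by a direct count of sets sorted by ``type'' according to their intersection with the core interval, concluding that $|\cF| \le |\cM(n, k, d, r^*)| \le m(n, k, d)$.

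The main obstacle lies in the intermediate regime $1 < r^* < \lfloor (k-1)/(d-1) \rfloor$: the extremal family $\cM(n, k, d, r^*)$ depends delicately on $n$, and the standard shift-and-compare method typically loses additive terms that one cannot afford in this range. Moreover, the transition between different optimal $r$ as $n$ varies yields multiple extremal configurations for boundary values of $n$, so the uniqueness claim demands a careful boundary case analysis, presumably by showing that equality in the counting step forces $\cF$ to be obtained from $\cM(n, k, d, r^*)$ by the reverse of the shift sequence. A secondary obstacle is that near the lower endpoint $n \approx kd/(d-1)$, many $k$-uniform families are automatically $d$-wise intersecting, inflating the space of candidate extremal families; one likely needs an extra ad hoc argument (or a computer-assisted verification, as was used to discover $\cM(11,7,3,3)$) to rule out sporadic constructions in this regime.
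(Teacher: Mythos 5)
You are attempting to prove a statement that the paper itself leaves open: Conjecture~\ref{conj:dwiseintconj} is stated as a conjecture, and the authors say explicitly that they could only slightly extend the range of Theorem~\ref{thm:dwiseintthm} and could not prove it, suggesting that an adaptation of the Ahlswede--Khachatrian pushing--pulling method (not shifting plus a Hilton--Milner splitting) would be the natural route. More importantly, the conjecture as stated appears to be \emph{false}: Remark~\ref{rem:tpkushige} records Tokushige's subsequent families $\cT(n,k,d,r)$, which are nontrivial $d$-wise intersecting, and notes that $t(120,77,4) > m(120,77,4)$. Since $n=120$ lies in the conjectured range $k\frac{d}{d-1} \le n \le \left(1+\frac{d}{2}\right)(k-d+2)$ for $k=77$, $d=4$, some $\cT(120,77,4,r)$ already violates the bound $|\cF|\le m(n,k,d)$. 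Consequently no proof strategy can succeed without first modifying the statement (cf.\ Question~\ref{ques:linz}, where the bound is $\max\{m(n,k,d), t(n,k,d)\}$).

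This falsity also pinpoints where your plan breaks: the claimed dichotomy in the shifted, still-nontrivial case --- that every $F$ either contains $1$ with $|F\cap[2,2+(d-1)r^*]|\ge 1+(d-2)r^*$, or omits $1$ and contains the whole core --- is exactly the structural statement that would force extremal families to be of type $\cM(n,k,d,r)$, and the families $\cT(n,k,d,r)$ (a kernel $[r]$ plus a degree-threshold condition on $[r+1,k+1]$, plus the sets $[k+1]\setminus\{i\}$) are shifted, nontrivial, $d$-wise intersecting families that do not fit this dichotomy and can beat $m(n,k,d)$. Beyond that, the proposal is a sketch rather than a proof: the key deductions are asserted (``should force'', ``presumably''), Lemma~\ref{lem:msetslem} only yields that $\cF$ is $(d-1)$-intersecting and pairwise-intersection lower bounds, which in this range of $n$ is far from determining the extremal structure, and the comparison step $|\cF|\le|\cM(n,k,d,r^*)|$ in the intermediate-$r$ regime --- which you yourself flag as the main obstacle --- is left entirely open.
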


Potentially, an appropriate adaptation of the proof of Ahlswede and Khachatrian \cite{AK2} could be used to prove Conjecture \ref{conj:dwiseintconj}. We managed to slightly extend the range for $n$ over Theorem~\ref{thm:dwiseintthm}, but could not prove the entire conjecture.

We do not have a precise guess as to which value of $r$ gives the maximal $|\cM(n, k, d, r)|$, nor for what values of $n$ we should expect $m(n, k, d) > |\cA(n, k, d)|$. Computations indicate that $\cA(n, k, d)$ is maximal in the range $n \ge k\frac{d-1}{d-2}$, while if $n \sim k \frac{d}{d-1}$ and $k$ is sufficiently large in terms of (fixed) $d$, then $\cA(n, k, d)$ will not be maximal. We formulate a question for further study.

\begin{question}\label{ques:dwiseintques}
Assuming Conjecture \ref{conj:dwiseintconj} is true, which $\cM(n, k, d, r)$ would give the maximum size family? When would $\cA(n, k, d)$ be the maximum size family?
\end{question}

It would be interesting to know if this construction extends further to the case when $t > 1$.
\begin{question}\label{ques:nontrivdwisetint}
Let $d\ge 3$ and $t\ge 2$. Are there further examples of nontrivial $d$-wise $t$-intersecting families $\cF \subset \binom{[n]}{k}$ with
\[\binom{n-t}{k-t} > |\cF| > \max\{|\cA(n, k, t+d-1)|, |\cH(n, k, t+d-1)|\}\]
for some $n > kd/(d-1) - t$?
\end{question}

We have added the hypothesis $|\cF| < \binom{n-t}{k-t}$ to Question~\ref{ques:nontrivdwisetint} because such constructions would be most interesting in cases where the maximum-size $k$-uniform $d$-wise $t$-intersecting family is trivial. 

\begin{remark}\label{rem:tpkushige}
Subsequent to our Conjecture~\ref{conj:dwiseintconj}, Tokushige~\cite{Tok2} constructed a collection of nontrivial $d$-wise intersecting families that can be counterexamples to Conjecture~\ref{conj:dwiseintconj}. Namely, for $0\le r\le d-1$, he defined the families 

\[\cT(n, k, d, r):= \left\{A\in \binom{[n]}{k}: [r] \in A, |A\cap [r+1, k+1]| \ge j_0\right\} \bigcup \left\{[k+1]\setminus \{i\}: 1\le i\le r\right\},\]

where $j_0 = \lfloor{\frac{d-r-1}{d-r}(k-r+1)\rfloor}+1$. 
Note that $\cT(n, k, d, d-1) \cong \cH(n, k, d)$. Similarly to the definition for $\cM$, define
\[t(n, k, d):= \max_{0\le r\le d-1}|\cT(n, k, d, r)|.\]
One can check that, for example, $t(120, 77, 4) > m(120, 77, 4)$. 

Tokushige~\cite[Problem~2]{Tok2} asks the following question:
\begin{question}[Tokushige]\label{ques:tokushige}
Let $\cF \subset \binom{[n]}{k}$ be a nontrivial $d$-wise intersecting family. Is it true that for $n \ge kd/(d-1)$,
\[|\cF| \le \max\{|\cA(n, k, d)|, t(n, k, d)\}?\]
\end{question}

The answer to Question~\ref{ques:tokushige} as stated is ``no", because there are examples where $m(n, k, d) > t(n, k, d)$ and $m(n, k, d)> |\cA(n, k, d)|$. The smallest example is $n=12$, $k=8$, and $d=3$, where $m(12, 8, 3) = |\cM(12, 8, 3, 3)| = 299 > 261 = t(12, 8, 3)$. We therefore propose a modified version of Tokushige's question. 

\begin{question}\label{ques:linz}
Let $\cF \subset \binom{[n]}{k}$ be a nontrivial $d$-wise intersecting family. Is it true that for $n \ge kd/(d-1)$,
\[|\cF| \le \max\{m(n, k, d), t(n, k, d)\}?\]
\end{question}

One reason to think that the answer to Question~\ref{ques:linz} might be ``yes" is that $\cM(n, k, d, r)$ and $\cT(n, k, d, r)$ are collections of families which include and generalize the families $\cA(n, k, d)$ and $\cH(n, k, d)$. The families $\cM(n, k, d, r)$ and $\cT(n, k, d, r)$ may, in some sense, give a series of families lying in between $\cA(n, k, d)$ and $\cH(n, k, d)$. As some supporting evidence for this assertion, it may be checked that if $d-1$ divides $k-1$, then
\[\cM\left(n, k, d, \frac{k-1}{d-1}\right) \cong \cT(n, k, d, 1).\]

\end{remark}

\section{Proof of Theorem \ref{thm:extensionthm}}
As we noted in the Introduction, Conjecture \ref{conj:missingoneintconj} is true when $k-\ell$ is a prime power. This is a consequence of a theorem of Frankl and Wilson~\cite{FW}. 

\begin{theorem}[Frankl-Wilson]\label{thm:fwthm}
Let $p$ be a prime and let $q=p^e$ be a prime power. Suppose that $\cF\subset \binom{[n]}{k}$ satisfies $|F\cap F'| \not\equiv k\pmod{q}$ for distinct $F, F' \in \cF$. Then $|\cF| \le \binom{n}{q-1}$. 
\end{theorem}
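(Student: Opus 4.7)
The plan is to prove the theorem via the polynomial method, which is the standard approach for Frankl--Wilson type bounds. For each $F \in \cF$, let $v_F \in \{0,1\}^n$ denote its characteristic vector, and write $L = \{0, 1, \ldots, q-1\} \setminus \{k \bmod q\}$, so that $|L| = q-1$. Note that $\langle v_F, v_{F'}\rangle = |F \cap F'|$ whenever $F, F' \in \cF$.

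The first step is to associate to each $F \in \cF$ an integer polynomial of degree $q-1$,
$$P_F(x_1, \ldots, x_n) = \prod_{s \in L} \bigl(\langle v_F, x\rangle - s\bigr).$$
The hypothesis $|F\cap F'| \not\equiv k \pmod q$ for distinct $F, F' \in \cF$ guarantees that $|F\cap F'| \equiv s \pmod{q}$ for some $s \in L$, so the corresponding factor is divisible by $q = p^e$, giving $P_F(v_{F'}) \equiv 0 \pmod{q}$. On the other hand, $P_F(v_F) = \prod_{s \in L}(k - s)$, and since $k \not\equiv s \pmod{q}$ for every $s \in L$, the value $P_F(v_F)$ has strictly smaller $p$-adic valuation than each $P_F(v_{F'})$ with $F' \ne F$. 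Dividing out by the appropriate common power of $p$ and reducing modulo $p$ yields polynomials $\overline{P}_F \in \mathbb{F}_p[x_1, \ldots, x_n]$ for which $\overline{P}_F(v_F) \ne 0$ and $\overline{P}_F(v_{F'}) = 0$ for all $F' \in \cF \setminus \{F\}$. This triangular evaluation pattern forces $\{\overline{P}_F : F \in \cF\}$ to be linearly independent over $\mathbb{F}_p$.

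The second step bounds the dimension of the ambient space. After reducing $x_i^2$ to $x_i$, which is legal since we only evaluate at $\{0,1\}$-vectors, each $\overline{P}_F$ becomes a multilinear polynomial of total degree at most $q-1$. Because every $F \in \cF$ has $|F| = k$, the relation $x_1 + \cdots + x_n \equiv k$ holds on all evaluation points $v_F$, so modulo this relation each $\overline{P}_F$ can be replaced by a multilinear polynomial that is homogeneous of degree exactly $q-1$; the space of such polynomials has dimension $\binom{n}{q-1}$. Combining the two steps gives $|\cF| \le \binom{n}{q-1}$.

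The main technical obstacle lies in the first step: one must verify that the $p$-adic valuation of $P_F(v_F)$ is indeed strictly smaller than the common valuation of the $P_F(v_{F'})$, so that the reduced polynomials $\overline{P}_F$ genuinely form a triangular system over $\mathbb{F}_p$. This is precisely where the prime-power hypothesis $q = p^e$ is needed, via Lucas-type or Kummer-type estimates for binomial coefficients modulo prime powers, and it explains why the argument does not extend to arbitrary composite moduli.
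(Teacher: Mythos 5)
First, a point of comparison: the paper does not prove Theorem~\ref{thm:fwthm} at all --- it is quoted from Frankl and Wilson \cite{FW} and used as a black box --- so there is no internal proof to measure your argument against; it must stand on its own. Judged that way, your outline follows the standard linear-independence route, but it has a genuine gap at exactly the point you flag as ``the main technical obstacle'': the strict valuation inequality $v_p(P_F(v_F)) < v_p(P_F(v_{F'}))$ is the entire content of the prime-power case, and the justification you give does not establish it. Your argument shows only that each off-diagonal value has one factor divisible by $q=p^e$, hence valuation at least $e$ from that factor, while for the diagonal value the factors $k-s$, $s\in L$, run over all nonzero residues mod $q$, so $v_p\bigl(\prod_{s\in L}(k-s)\bigr)=v_p((q-1)!)=\tfrac{q-1}{p-1}-e$. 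For $q=8$ this equals $v_2(7!)=4>3=e$, so the diagonal valuation is \emph{not} smaller than $e$ and the naive triangularity fails for $e\ge 3$. The correct comparison must track all $q-1$ factors of the off-diagonal entry: with $\mu=|F\cap F'|\bmod q$ one gets $v_p(P_F(v_{F'}))\ge e+v_p((q-1)!)-v_p\bigl((k-\mu)\bmod q\bigr)\ge v_p((q-1)!)+1$, since $(k-\mu)\bmod q\neq 0$ forces $v_p\bigl((k-\mu)\bmod q\bigr)\le e-1$. This computation is the heart of the theorem and is missing from your write-up.

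There are also two technical missteps in the setup that need repair. You cannot ``divide $P_F$ by the appropriate common power of $p$'' as a polynomial: $P_F$ is monic of degree $q-1$ in $\langle v_F,x\rangle$, so its coefficients are not all divisible by $p$; only its \emph{values} at the points $v_{F'}$ are. The argument should instead be run on the evaluation matrix $M=\bigl(P_F(v_{F'})\bigr)_{F,F'}$: dividing every entry by $p^{\,v_p((q-1)!)}$ leaves an integer matrix congruent mod $p$ to an invertible diagonal matrix, so $M$ is nonsingular and the restrictions of the $P_F$ to the evaluation set are linearly independent over $\mathbb{Q}$ --- not over $\mathbb{F}_p$. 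This matters for your second step: homogenizing via $x_1+\cdots+x_n=k$ uses the identity $(k-|S|)\prod_{i\in S}x_i=\sum_{i\notin S}\prod_{j\in S\cup\{i\}}x_j$ on the evaluation points, and solving for the degree-$|S|$ monomial requires $k-|S|$ to be invertible, which can fail over $\mathbb{F}_p$ when $|S|\equiv k\pmod p$; over $\mathbb{Q}$ it is harmless (for $q-1\le k$). With both repairs the proof goes through and recovers the now-standard streamlined form of the Frankl--Wilson argument, whose original version was phrased in terms of inclusion matrices over $\mathbb{Z}$.
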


To prove Theorem \ref{thm:extensionthm}, we use a theorem proven in \cite{KMW}. 
\begin{theorem}\label{thm:ell=1thm}
Suppose that $p$ is prime, $k\in \mathbb{N}$, $L\subset \{0, \ldots, k-1\}$, and $f(x)$ is an integer valued polynomial of degree $d \le k$ such that $f(t) \equiv 0\pmod{p}$ for every $t \in L$ and $f(k) \not\equiv 0 \pmod{p}$. If $\cF$ is a $k$-uniform $L$-intersecting set system on $[n]$, then $|\cF|\le \binom{n}{d}$. 
\end{theorem}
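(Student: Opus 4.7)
The natural approach is the polynomial method in the style of Frankl and Wilson. For each $F\in\cF$, let $v_F\in\{0,1\}^n$ denote its characteristic vector and define
\[
g_F(x_1,\dots,x_n) \;:=\; f\!\left(\sum_{i\in F}x_i\right)\in\mathbb{F}_p[x_1,\dots,x_n],
\]
with integer coefficients reduced modulo $p$. Since $g_F(v_G)=f(|F\cap G|)\bmod p$, the hypotheses give $g_F(v_F)=f(k)\not\equiv 0$ and $g_F(v_G)\equiv 0$ whenever $F\neq G$ (as $|F\cap G|\in L$ and $f$ vanishes on $L$ modulo $p$). A standard triangularity argument, namely evaluating a putative dependence $\sum_F c_F g_F \equiv 0$ at $v_G$ to force $c_G f(k) \equiv 0$ and hence $c_G=0$, then shows the functions $\{g_F|_{\binom{[n]}{k}}\}_{F\in\cF}$ are linearly independent over $\mathbb{F}_p$. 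Because the inputs are $0/1$-vectors, I would replace each $x_i^2$ by $x_i$ and so reduce each $g_F$ to a multilinear polynomial $\tilde g_F$ of total degree at most $d$, agreeing with $g_F$ as a function on $\{0,1\}^n$.

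At this stage the easy bound $|\cF|\le \sum_{i=0}^d\binom{n}{i}$ is already in hand. The key step is to promote it to the sharper $\binom{n}{d}$ by exploiting $k$-uniformity. My plan is the following reduction: as functions on the slice $\binom{[n]}{k}$, every lower-degree multilinear monomial $x_S$ with $|S|<d$ is an $\mathbb{F}_p$-linear combination of degree-$d$ multilinear monomials, via the counting identity
\[
\binom{k-|S|}{d-|S|}\,x_S \;=\; \sum_{T\supseteq S,\;|T|=d} x_T \qquad \text{on}\ \binom{[n]}{k},
\]
obtained by counting, for a $k$-subset $F$ containing $S$, the number of $d$-subsets of $F$ that contain $S$. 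Whenever the binomial coefficients involved are all invertible modulo $p$, this identity inductively expresses every $x_S$ with $|S|<d$ in terms of monomials of degree exactly $d$ on the slice, so the span of the $\tilde g_F$ has dimension at most $\binom{n}{d}$, yielding the desired bound.

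The main obstacle is handling the case in which some $\binom{k-|S|}{d-|S|}$ is divisible by $p$, which can occur when $p\le k$. I would attack this by exploiting the freedom in the choice of $f$: the hypothesis $f(k)\not\equiv 0\pmod p$ together with $f|_L\equiv 0\pmod p$ lets one replace $f$ by a polynomial of the same degree whose degree-$d$ leading coefficient is compatible with removing the problematic binomial denominators. Equivalently, one can work inside the quotient ring $\mathbb{F}_p[x_1,\dots,x_n]/(x_i^2-x_i,\ \sum_i x_i - k)$ and compute directly that the image of multilinear polynomials of degree at most $d$ in this quotient has $\mathbb{F}_p$-dimension exactly $\binom{n}{d}$ via a filtration by degree and an $S_n$-equivariant basis argument. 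Either route delivers the sharp inequality $|\cF|\le\binom{n}{d}$, completing the proof.
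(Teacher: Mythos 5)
First, a point of reference: the paper does not prove this statement at all --- it is quoted verbatim from Keevash, Mubayi and Wilson \cite{KMW} and used as a black box in Section 5. So there is no in-paper proof to compare against, and your attempt has to be judged on its own merits.

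Your setup (the polynomials $g_F$, the triangularity argument, multilinearization) is the standard route and is essentially correct, up to one small repair: $f$ is only assumed \emph{integer-valued}, not integer-coefficient, so ``reducing the coefficients mod $p$'' is not directly meaningful; one should instead expand $f(x)=\sum_{i\le d}c_i\binom{x}{i}$ with $c_i\in\mathbb{Z}$ and use $\binom{|F\cap G|}{i}=\sum_{S\in\binom{F}{i}}\mathbf{1}[S\subseteq G]$, which lands you in the $\mathbb{F}_p$-span of the functions $\mathbf{1}[S\subseteq\cdot]$, $|S|\le d$, on the slice. That gives the weak bound $\sum_{i\le d}\binom{n}{i}$ soundly.

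The genuine gap is exactly where you flag it: the promotion to $\binom{n}{d}$. You correctly observe that the identity $\binom{k-|S|}{d-|S|}x_S=\sum_{T\supseteq S,|T|=d}x_T$ on $\binom{[n]}{k}$ breaks down over $\mathbb{F}_p$ when $p\mid\binom{k-|S|}{d-|S|}$, but neither of your proposed fixes works. Modifying the leading coefficient of $f$ does not help, because the obstruction lives in the geometry of the monomials on the slice, not in $f$. The quotient-ring route is also flawed: over $\mathbb{F}_p$ the relation $\sum_i x_i-k$ vanishes on every slice $\binom{[n]}{k'}$ with $k'\equiv k\pmod p$, so that quotient does not isolate the $k$-slice, and the claim that the degree-$\le d$ part has dimension \emph{exactly} $\binom{n}{d}$ is false in general (the $p$-rank of the inclusion matrix $W_{d,k}$ can drop below $\binom{n}{d}$, by Wilson's diagonal form theorem). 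The correct, and very short, repair is a rank comparison: the span you need to bound is the $\mathbb{F}_p$-row space of the stacked integer inclusion matrix with rows $\mathbf{1}[S\subseteq\cdot]$, $|S|\le d$, and columns indexed by $\binom{[n]}{k}$. For an integer matrix the rank over $\mathbb{F}_p$ never exceeds the rank over $\mathbb{Q}$ (all $(r+1)\times(r+1)$ minors vanishing over $\mathbb{Q}$ forces them to vanish over $\mathbb{F}_p$), and over $\mathbb{Q}$ your own counting identity shows the row space coincides with that of $W_{d,k}$, which has rank at most $\binom{n}{d}$ since $d\le k$. Inserting this observation in place of your two speculative fixes completes the proof; without it, the argument as written does not establish the sharp bound.
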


Keevash, Mubayi and Wilson \cite{KMW} used Theorem \ref{thm:ell=1thm} to prove the case $\ell = 1$ of Conjecture \ref{conj:missingoneintconj}. We modify their argument to prove Theorem \ref{thm:extensionthm}.

\begin{proof}[Proof of Theorem \ref{thm:extensionthm}]
We apply Theorem \ref{thm:ell=1thm} with $L = [0, k-1] - \{\ell\}$. Let $f(x) = \binom{x-\ell-1}{k-\ell-1}$, interpreted as a polynomial of degree $k-\ell-1$. Then $f(i) = 0$ for $\ell+1 \le i \le k - 1$ and $f(k) = 1$. For $0 \le i \le \ell - 1$, note that \[f(i) = \binom{i-\ell - 1}{k-\ell - 1} = (-1)^{k-\ell+1}\binom{k-i-1}{\ell - i} = (-1)^{k-\ell+1}\frac{(k-i-1)\cdot \ldots \cdot(k-\ell)}{(\ell - i)!}.\] If $k - \ell$ is not a divisor of $\ell!$, then there is a prime $p$ such that for some $a \ge 1$, $p^a$ divides $k-\ell$, but $p^a$ does not divide $\ell!$. For this $p$, we can see that $f(i) \equiv 0\pmod{p}$ for $0 \le i \le \ell - 1$, so by Theorem \ref{thm:ell=1thm}, $|\cF| \le \binom{n}{k-\ell-1}$. 
\end{proof}

\section{Proof of Theorem \ref{thm:kstintthm}}

Our construction of a $K_{s, t}$-intersecting family is based on the set families 
\[\cA_i(n, d, t) = \{A \subset 2^{[n]}: |A\cap [t+di]| \ge  t+(d-1)i\}.\]
Note that each of these families is $d$-wise, $t$-intersecting. Frankl~\cite{F2} conjectures that the maximum size $d$-wise, $t$-intersecting family on $[n]$ is $\max\{|\cA_i|:\hspace{1mm} 0 \le i \le (n-t)/d\}$. 

\begin{proof}[Proof of Theorem \ref{thm:kstintthm}]
Let $S$ be a (labelled) set of $s$ vertices, and $R$ be a labelled set of $t+2s$ vertices such that $R\cap S = \emptyset$. For each $i \in S$, let $d_R(i)$ be the number of vertices in $R$ adjacent to $i$. Let $\cF$  be the family of all labelled graphs on $n$ vertices such that $d_R(i) \ge t+2s-1$ for every $i \in S$. The family $\cF$ is $K_{s, t}$-intersecting, because if $G_1, G_2 \in \cF$, then the vertices in the  set $S$ in $G_1 \cap G_2$ will have at least $t$ common neighbors from $R$. 

We count the graphs in $\cF$. Each vertex in $S$ has $t+2s+1$ possible neighborhoods in $R$, so there are $(t+2s+1)^s$ possible edge sets on $S\times R$. Each edge of $K_n$ that is not in $S\times R$ (\textit{i.e.} does not have one endpoint in $S$ and the other one in $R$) can independently be or not be in such a graph in $\cF$, so $|\cF| = (t+2s+1)^s2^{\binom{n}{2}-st - 2s^2}$. By assumption, $t > 2^{2s}-2s-1$, so $|\cF| > 2^{\binom{n}{2} - st}$.
\end{proof}

The construction for Theorem~\ref{ks1srtint} is quite similar. For $1\le i\le r$, let $S_i$ be a labelled set of $s_i$ vertices, and $R$ be a labelled set of $t+2\sum_{i}s_i$ vertices such that $S_1, \ldots, S_r$ and $R$ are disjoint. We choose $\cF$ to be the family of all labelled graphs on $n$ vertices such that $d_R(i) \ge t+2\sum_{i}s_i-1$ for every $i \in \bigcup S_j$ and which contain all edges between distinct parts $S_i$ and $S_j$. We omit the rest of the details as they are similar to those in the proof of Theorem~\ref{thm:kstintthm}.

\section*{Acknowledgements}
We thank the anonymous referees for their careful reading of the manuscript and their detailed and helpful comments. We also thank the second referee for pointing out the simple proof of Theorem~\ref{thm:ekrdwisetintthm} which we have adopted. 

After making this manuscript public, it was brought to our attention that  Theorem~\ref{thm:extensionthm} follows from a result of Mubayi and R\"{o}dl~\cite[Theorem 15]{MR}. We thank Dhruv Mubayi for pointing this out to us.

\end{document}